\documentclass[12pt]{article}
\usepackage{bbm, fancyhdr}
\usepackage{amssymb}
\usepackage{mathrsfs}
\usepackage{mathtools}
\usepackage{amsmath}
\usepackage{float}
\usepackage{amsthm}
\usepackage[utf8]{inputenc}
\usepackage[english]{babel}
\usepackage{comment}
\usepackage{listings}
\usepackage{enumitem}
\usepackage{scalerel}
\usepackage{stackengine,wasysym}
\usepackage{titlefoot}
\usepackage{marvosym}
\usepackage[backref=page]{hyperref}
\usepackage{xcolor}
\definecolor{darkgreen}{RGB}{0,125,0}
\hypersetup{colorlinks,
	breaklinks,
	urlcolor=magenta,
	linkcolor=magenta,
	citecolor=darkgreen}
\renewcommand*{\backref}[1]{}
\renewcommand*{\backrefalt}[4]{%
	\ifcase #1 (Not cited.)%
	\or        (Cited on page~#2.)%
	\else      (Cited on pages~#2.)%
	\fi}
\usepackage[capitalise,nameinlink]{cleveref}
\usepackage{aliascnt}
\usepackage{todonotes}
\usepackage{tikz-cd}

\newtheorem{theorem}{Theorem}[section] 

\newaliascnt{proposition}{theorem}
\newtheorem{proposition}[proposition]{Proposition}
\aliascntresetthe{proposition}
\crefname{proposition}{Proposition}{Propositions}

\newaliascnt{proposition_definition}{theorem}

\aliascntresetthe{proposition_definition}
\crefname{proposition_definition}{Proposition/Definition}{Propositions/Definitions}

\newaliascnt{corollary}{theorem}

\aliascntresetthe{corollary}
\crefname{corollary}{Corollary}{Corollaries}

\newaliascnt{lemma}{theorem}
\newtheorem{lemma}[lemma]{Lemma}
\aliascntresetthe{lemma}
\crefname{lemma}{Lemma}{Lemmas}

\theoremstyle{definition}

\newaliascnt{remark}{theorem}
\newtheorem{remark}[remark]{Remark}
\aliascntresetthe{remark}
\crefname{remark}{Remark}{Remarks}

\newaliascnt{claim}{theorem}

\aliascntresetthe{claim}
\crefname{claim}{Claim}{Claims}

\newaliascnt{definition}{theorem}
\newtheorem{definition}[definition]{Definition}
\aliascntresetthe{definition}
\crefname{definition}{Definition}{Definitions}

\newcounter{proofstepcounter}[theorem]
\newcounter{proofstep}

\AddToHook{env/proof/begin}{\setcounter{proofstep}{0}}

\NewDocumentCommand{\newProofStep}{ o }{%
	\par\medskip\noindent
	\refstepcounter{proofstep}%
	\IfNoValueTF{#1}{%
		\textbf{Step \theproofstep.}
	}{%
		\textbf{Step \theproofstep. #1.}
	}%
	\quad\ignorespaces
}

\crefname{proofstep}{Step}{Steps}
\Crefname{proofstep}{Step}{Steps}

\newcommand\N{\mathbb{N}}
\newcommand\Z{\mathbb{Z}}

\newcommand\R{\mathbb{R}}
\newcommand\C{\mathbb{C}}
\newcommand\T{\mathbb{T}}

\newcommand\eval{\textrm{ev}}
\newcommand\GL{\textrm{GL}}

\newcommand\ord{\textrm{ord}}
\newcommand\lcm{\textrm{lcm}}

\newcommand\End{\textrm{End}}

\newcommand\LieAlg[1]{\mathfrak{#1}}
\newcommand\Spec{\mathrm{Spec}}

\newcommand\spanSimple{\mathrm{span}}

\newcommand\supp{{\textrm{supp}}}

\newcommand\cdef[1]{{\bf \textrm{#1}}}
\newcommand\zarClosure[1]{{\overline{\langle #1 \rangle}^{\textrm{Zar}}}}
\newcommand\holo{\mathcal{O}}
\newcommand\stab{{\textrm{Stab}}}
\newcommand\Hom{\textrm{Hom}}

\newcommand\Aut{\textrm{Aut}}
\newcommand\alg{\textrm{alg}}
\newcommand\Tot{\textrm{Tot}}

\newcommand{\quasireg}{\textrm{qr}}

\renewcommand\Im{\textrm{Im}}

\newcommand\identity{\rm{Id}}
\renewcommand\Re{\textrm{Re}}
\newcommand\Iso{\textrm{Iso}}

\newcommand\codim{\textrm{codim}}

\newcommand\Fr{\textrm{Fr}}

\newcommand\Sing{\textrm{Sing}}

\renewcommand{\leftmark}{{\scriptsize Holomorphic tensors on products of algebraic cones}}

\begin{document}
	\begin{center}
	{\Large\bf  Holomorphic tensors on products of algebraic cones}\\[5mm]
	{\large Vlad Marchidanu\footnote{Partly supported by the PNRR-III-C9-2023-I8 grant CF 149/31.07.2023 {\em Conformal Aspects of Geometry and Dynamics}.\\[1mm]
		
		\noindent{\bf Keywords: algebraic cone, holomorphic tensor, Sasakian manifold, LCK with potential, Zariski closure, normal variety} 
		
		\noindent {\bf 2020 Mathematics Subject Classification:} 53C55, 53C25, 32Q40, 14N99, 32Q28}}\\[4mm]
	\end{center}
	
	{\small
		\hspace{0.15\linewidth}
		\begin{minipage}[t]{0.7\linewidth}
			{\bf Abstract.} We study the product $C$ of two algebraic cones equipped with algebraic structures given by contractions. First we show that any holomorphic tensor on a quotient of $C$ by a group containing a contraction on both factors is invariant under the Zariski closure of this contraction when the factors have dimension $\geq 2$. We then give an explicit embedding of the cone of a Sasaki manifold to a normal variety. Using it and the result on algebraic cones, we prove that any holomorphic tensor on the product of two Sasaki manifolds is invariant under the flows of the Reeb fields.
		\end{minipage}
	}

\tableofcontents
 
\section{Introduction}

An algebraic cone can be defined abstractly as an algebraic variety over $\C$ with a single singular point to which it admits a contraction (\cref{def::alg_cone_contraction}). 
In \cite{ornea_verbitsky::algebraic_cones}, the authors endow algebraic cones with an algebraic structure (\cref{def::algebraicStructure}) defined in terms of the contraction, and show that the definition is independent of the choice of contraction (\cite[Theorem~2.9]{ornea_verbitsky::algebraic_cones}). To do this, they enrich a mesh of ideas exposed primarily in \cite{ornea_verbitsky::convexShells} and \cite{ornea_verbitsky::LeeClassesLCKPotential}, which establishes the equivalence between algebraic cones and Stein completions of $\Z$-covers of locally conformally K\"ahler (LCK) manifolds with proper potential (\cref{thm::algebraicConesAreZCoversLCK}). Thus, even though the arguments do not involve their differential geometry, LCK manifolds appear as a natural geometric framework and motivation for the study of algebraic cones.

Locally conformally K\"ahler manifolds are an expansive generalisation of K\"ahler ones.
The more restrictive subclass of LCK manifolds with potential was introduced in \cite{ornea_verbitsky::2010LCKwithPotential} and proved a rich topic of study and a useful tool ever since (see \cite{ornea_verbitsky::LCKRank}, \cite{ornea_verbitsky::topology}, \cite{ornea_verbitsky::convexShells}, etc., or the monograph \cite{lck_book}). Sasaki manifolds, on the other hand, are often seen as an odd-dimensional counterpart to K\"ahler ones (via \cref{def::Sasaki}). They are also intimately related to Vaisman -- thus LCK -- geometry, since their metric cone produces Vaisman manifolds once quotiented by a $\Z$-action (\cite{ornea_verbitsky::structureTheoremVaisman}). 

In \cite{ornea_verbitsky::holomorphicTensors}, the authors prove that any holomorphic tensor on a Vaisman manifold is invariant under the flows of the Lee and anti-Lee field (\cite[Theorem~5.1]{ornea_verbitsky::holomorphicTensors}), a result known by \cite{tsukada::holomorphicForms} for holomorphic vector fields and differential forms. The starting motivation of the present paper is to obtain a similar result for products of Sasaki manifolds studied in \cite{marchidanu::prodSasaki}, where we interpreted in a different way a family of complex structures on it and showed that this family cannot bear LCK metrics. 

In part, this goal places us naturally in the more general context of (products of) algebraic cones, which we treat in \cref{sec::algebraic_cones}, \cref{sec::products_algebraic_cones}, and \cref{sec::holom_tensors_prod_alg_cones}. We obtain (\cref{thm::holomorphicTensorsProdConesFromBase}) that any holomorphic section of a tensor bundle on the quotient of a product of algebraic cones by a $\C$-action containing a contraction on both factors is invariant under the Zariski closure of the contraction (with respect to an algebraic structure, obtained in \cref{thm::algebraicStructureFromContraction}).

We apply this to the product of Sasaki manifolds in \cref{thm::holomorphicTensorsProdSasaki} to obtain that any holomorphic tensor on the product of Sasaki manifolds is invariant under the flows of the Reeb fields. 
The passage to this context, however, is difficult and serves as a pretext to compile several results from algebraic geometry and complex analysis. 
Namely, we require an explicit embedding of (the product of) the cone(s) over Sasaki manifolds into $\C^K \setminus \{ 0 \}$ such that its closure is a normal variety.

For this, we begin by revisiting the strategy of \cite{vanCoevering::2011ExamplesAsymptoticallyConical}; as the author notes, the first proof of an embedding result is given in \cite{ornea_verbitsky::embeddingsCompactSasaki}, but it is not explicit. Meanwhile, the literature using sections of orbibundles is primarily concerned only with embedding the orbifold's underlying analytic space (\cite{ross_thomas::weightedProjectiveEmbeddings}), which does not suffice for our purposes.

We therefore give a proof of this embedding, not just for the purpose of self-containment. There are more important aspects that we require and which, to our knowledge, the existing literature doesn't bridge together, thus requiring careful assembly. Firstly, the finite generation as a $\C$-algebra of the ring used for embedding requires the orbibundle viewpoint, not developed in \cite{vanCoevering::2011ExamplesAsymptoticallyConical}; we treat it in \cref{proposition::sectionRingFinitelyGenerated}. Secondly, to show that the image under the embedding is the (product of) Stein completion(s), we also need to prove algebraic normality of a ring of sections (\cref{step::RIntegrallyClosed} of \cref{thm::holomorphicTensorsProdSasaki}), for which the embedding technique is leveraged explicitly. Finally, explicit control over the embedding is needed to allow for a particular kind of generators (see \cref{step::weightDecomposition} of \cref{thm::holomorphicTensorsProdSasaki}). For such generators, invariance under Zariski closure reduces to easy computations and density arguments.

The paper is organized as follows. In \cref{sec::algebraic_cones} we give the precise definition of algebraic cones and recall the results necessary to define their Stein completions; we recall also the definition of LCK manifolds with (proper) potential and the equivalence between closed algebraic cones and covers of Stein completions of LCK manifolds with proper potential (\cref{thm::algebraicConesAreZCoversLCK}). In \cref{sec::products_algebraic_cones} we set up the framerwork for \cref{sec::holom_tensors_prod_alg_cones} and show in \cref{thm::algebraicStructureFromContraction} how to endow the product of two closed algebraic cones with an algebraic structure (\cref{def::algebraicStructure}), while in \cref{sec::holom_tensors_prod_alg_cones} we display the mechanism which guarantees invariance under the Zariski closure based on invariance under a generator in \cref{lemma::sectionsCoherentSheavesProductConesAreInvariant}, used in \cref{thm::holomorphicTensorsProdConesFromBase}. In \cref{sec::sasakianManifolds} we define Sasaki manifolds and set up a preferred orbifold atlas for a quasiregular vector field, as well as the line bundle we will work with onwards, while in section \cref{sec::sectionRingAmpleness} we consider the ring of sections of powers of this line bundle, show how to identify it with homogeneous functions on the Sasakian cone (\cref{lemma::sectionRingsAreFunctions}), and show that it is finitely generated (\cref{proposition::sectionRingFinitelyGenerated}). In \cref{sec::holomTensorsProdSasaki} we briefly recall some aspects about complex structures on the product of Sasaki manifolds, then prove the main result \cref{thm::holomorphicTensorsProdSasaki}, relying on \cref{thm::holomorphicTensorsProdConesFromBase} and \cref{sec::sasakianManifolds} and \cref{sec::sectionRingAmpleness}.

\vspace{2em}

\textit{Acknowledgements.}
I am very grateful to Misha Verbitsky for suggesting this topic, for the very inspirational and insightful discussions we've had, and for his warm hospitality at IMPA. I am also highly thankful to Liviu Ornea for his very useful remarks, guidance, and constant support.

\section{Algebraic cones}
\label{sec::algebraic_cones}

	In this section we review the literature on algebraic cones, 
	presenting several equivalent ways to define them 
	and considering some of their properties. We recall first:
	
	\begin{definition}
		\label{def::contraction}
		Let $X$ be a complex variety and $x \in X$. A biholomorphism $\gamma \in \Aut(M)$ is called a \cdef{holomorphic contraction to $x$} if:
		\begin{itemize}
			\item $\gamma(x) = x$ and
			\item for any open set $U$ with $x \in U$ and any $K \subset X$ compact, there exists $N \gg 0$ such that $\gamma^N(K) \subset U$.
		\end{itemize}
	\end{definition}
	
	Complex varieties endowed with contractions are too general to make an object of study by available methods. Therefore, we restrict ourselves to Stein varieties:
	
	\begin{definition}
	\label{def::stein_variety}
	A complex variety $X$ is called \cdef{Stein} if the following hold:
	\begin{enumerate}[]
		\item \label{def::separation_property_Stein}
		for any $x \neq y \in X$ there exists $f \in H^0(M, \holo_X)$ such that $f(x) \neq f(y)$
		\item \label{def::holomorphically_convex_property_Stein}
		the holomorphically convex hull of any compact set is compact
	\end{enumerate}
\end{definition}
	
	The most natural definition of an algebraic cone manifold involves just the concept of a contraction:
	
	\begin{definition}
		\label{def::alg_cone_contraction}
		Let $X$ be a complex (Stein)\footnote{In fact, it is easy to see that the Stein condition follows a posteriori} variety with a unique, isolated singularity $x \in X$. Suppose $X$ admits a holomorphic contraction with origin in $x$. Then $X$ is called an \cdef{closed algebraic cone manifold}. $X \setminus \{ x \}$ is called an \cdef{open algebraic cone manifold}\footnote{In \cref{def::alg_cone_contraction} we insist on the word ``manifold" because there is also the more general notion of a (closed/open) algebraic cone varity, in which we allow for more than a unique, isolated singularity.}.
	\end{definition}
	
	For a given open cone manifold, there are in fact several closed cone manifolds from which it can be obtained. We will be interested in the following, privileged kinds:
	\begin{definition}
		A complex variety $X$ is called \cdef{geometrically normal} if any locally bounded meromorphic function on $X$ is holomorphic.
	\end{definition}
	
	It turns out that for a given open cone manifold, there exists a unique geometrically normal closed cone manifold which it originates from. To understand this, we need to work in the more general framework of Stein completions.
	
	\begin{definition}
		Let $X$ be a complex variety. A complex Stein variety $\widetilde X$
		 is called \cdef{a weak Stein completion} of $X$ if 
		 there exists a compact set $K \subset \widetilde X$ such that 
		 $X$ is biholomorphic to $\widetilde X \setminus K$.
		 If $\widetilde X$ can be chosen normal, then $\widetilde X$ is called \cdef{the Stein completion} of $X$.
	\end{definition}
	
	\begin{remark}
		\label{rmk::stein_completion}
		If $\widetilde X$ is a weak Stein completion of $X$ such that $\widetilde X$ happens to be geometrically normal, then by \cite[Theorem~6.6]{rossi:vector_fields_analytic_spaces} any holomorphic function on $X$ can be extended to one on $\widetilde X$, while \cite[Satz~1,~p.~378]{forster:theorie_steinischer} guarantees (for any Stein variety, not just in the geometrically normal setting) that $H^0(\widetilde X, \holo_{\widetilde X})$ uniquely determines $\widetilde X$. Therefore, $\widetilde X$ is unique, justifying the definite article in the expression "the Stein completion".
	\end{remark}
	
	Assuming now again that $X$ is an open cone manifold, \cref{rmk::stein_completion} takes care of the uniqueness of a geometrically normal closed cone. The existence is one of the results proven in \cite{ornea_verbitsky::algebraic_cones}, where a characterisation of algebraic cones involving the total space of a line bundle on a projective orbifold is proven and used to this end; by virtue of it, the existence in question becomes a consequence of the Remmert-Stein theorem, \cite[Proposition~4.3]{ornea_verbitsky::algebraic_cones}, and the equivalence between projective normality and normality expressed in \cite[Proposition~8.4]{ornea_verbitsky::algebraic_cones}.
	
	We do not insist on the aforementioned characterisation of algebraic cones, merely recalling it in the proof of \cref{thm::algebraicConesAreZCoversLCK}, which is another perspective on algebraic cones involving LCK manifolds. We use the latter in \cref{sec::products_algebraic_cones}. 
	
	From existing results in the literature, we can establish immediately that algebraic cone manifolds
	correspond to $\Z$-covers of an important class of LCK manifold called LCK manifolds with potential, which have a rich history. The class was introduced in  \cite{ornea_verbitsky::2010LCKwithPotential} with the objective of proving deformation stability, while \cite{ornea_verbitsky::survey} broadened the
	definition to ``automorphic potential on a cover'' and conjectured
	properness is automatic. This was refuted in \cite{ornea_verbitsky::LCKRank}, whence the
	proper/improper dichotomy was introduced via the LCK rank. For an extended history of the topic we refer to \cite{lck_book}.
	For convenience, we recall only the relevant definitions.
	
	\begin{definition}[{\cite[Definition~2.1]{ornea_verbitsky::survey}}]
		\label{def::lckPotential}
		An \cdef{LCK manifold with
		potential} is a complex manifold admitting a K\"ahler covering 
		$(\widetilde M,\widetilde \omega)$ and a
		positive smooth function $\varphi\colon\widetilde{M} \to \R^{>0}$ (the \cdef{LCK potential}) such that
		the deck group of $\widetilde{M}$ acts on $\varphi$ by multiplication by a constant, and
		$\varphi$ is a K\"ahler potential i.e. $d d^c \varphi = \widetilde \omega$.
	\end{definition}
	
	\begin{definition}
		\label{def::lckProperPotential}
		An \cdef{LCK manifold with proper potential} is an LCK manifold with potential admitting a
		K\"ahler cover $(\widetilde M,\widetilde \omega)$ with a potential $\varphi\colon\ \widetilde M \to\R^{>0}$ which is proper. Equivalently (\cite[Remark~2.6]{ornea_verbitsky::LCKRank}) $\widetilde M$ is a $\Z$-cover.
	\end{definition}
	
	The following theorem is an immediate consequence of two relationships established in \cite{ornea_verbitsky::algebraic_cones}, namely that between algebraic cones and total spaces of line bundles over projective orbifolds, 
	and that between these total spaces and LCK manifolds with proper potential.
	\begin{theorem}[{\cite{ornea_verbitsky::algebraic_cones}}]
		\label{thm::algebraicConesAreZCoversLCK}
		Any closed algebraic cone can be obtained as the Stein completion of the K\"ahler $\Z$-cover of an LCK manifold with proper potential. 
		Conversely, the Stein completion of the K\"ahler $\Z$-cover of an LCK manifold with proper potential is always a closed algebraic cone.
		\begin{proof}
			Let $L$ be an ample line bundle over a projective orbifold such that $\Tot^\circ(L)$, the space of non-zero vectors in $L$, is smooth. As in the proof of \cite[Proposition~3.6]{ornea_verbitsky::algebraic_cones}, the existence of a plurisubharmonic function on $\Tot^\circ(L)$ guarantess that if we consider an expanding homothety $\gamma \in \Aut(\Tot^\circ(L))$, the quotient $\Tot^\circ(L)/ \langle \gamma \rangle$ is an LCK manifold with potential.
			
			On the other hand, \cite[Theorem~7.9]{ornea_verbitsky::algebraic_cones} shows that the K\"ahler $\Z$-cover of an LCK manifold with proper potential is a smooth space $\Tot^\circ(L)$ for $L$ an ample line bundle over a projective orbifold as before.
			
			Finally, \cite[Theorem~2.12]{ornea_verbitsky::algebraic_cones} shows that any closed algebraic cone (in the sense of \cref{def::alg_cone_contraction}) is biholomorphic to the Stein completion of a smooth space $\Tot^\circ(L)$ as above.
		\end{proof}
	\end{theorem}

\section{Products of algebraic cones}
\label{sec::products_algebraic_cones}

Let $X_1$ and $X_2$ be two closed algebraic cones and $C_1$, $C_2$ their corresponding open cones. Denote $X:= X_1 \times X_2$ and $C := C_1 \times C_2$.
Our goal is to define an action on $C$ by the group $G = (\C, +)$, such that the quotient $M:= C / G$ is a complex manifold.

Suppose $\gamma \in G \subset \Aut(X)$ satisfies $\gamma\rvert_{X_1} \in \Aut(X_1)$ and $\gamma\rvert_{X_2} \in \Aut(X_2)$ and $\gamma_1 := \gamma\rvert_{X_1}$, $
\gamma_2 := \gamma\rvert_{X_2}$ are both contractions of $X_1$ and, respectively, $X_2$. Then $\{ \gamma_1, \gamma_2 \}$ generates a sub-action of $\mathbb Z^2 \subset G$. Suppose $G/\Z^2$ acts freely on $C$. 

Thus, the quotient $C \rightarrow C/G$ factors through:
$$
	C \rightarrow C/\Z^2 \rightarrow (C/\Z^2)/\T^2 = C/G
$$

By \cref{thm::algebraicConesAreZCoversLCK}, $X_1$ can be seen as the Stein completion of the $\mathbb Z$-cover of some compact LCK manifold with potential, and the same is true for $X_2$. 
Therefore, the quotient $C /\langle \gamma_1 , \gamma_2 \rangle$
is a product of two compact LCK manifolds, say $M_1 \times M_2$. Thus, $C/G$ is a smooth complex manifold, being a quotient of the complex manifold $M_1 \times M_2$ by the free action of $\T^2$.

	
	Given an endomorphism $\varphi$ of a vector space, a vector $v$ is called $\varphi$-finite if the space generated by $\{v, \varphi(v), \varphi^2(v), \ldots \}$ is finite dimensional. Consider as before $X$ a product of closed algebraic cone manifolds and $\gamma \in \Aut(X)$. The pullback $\gamma^*$ is an enodomorphism of $H^0(X, \mathcal{O}_X)$. In this subsection we show how, given a choice of $\gamma$, the $\gamma^*$-finite holomorphic functions determine an algebraic structure. To be precise:
	
	\begin{definition}
		\label{def::algebraicStructure}
		Let $X$ be an analytic variety. An \cdef{algebraic structure on $X$} is a subsheaf $\mathcal Z$ of $\mathcal{O}_X$ having the property that there exists a quasi-projective variety $Z$ and a biholomorphism $f: X \rightarrow Z$ satisfying $f_* \mathcal{Z} \simeq \mathcal{O}^{\textrm{alg}}_Z$, where $\mathcal{O}^{\textrm{alg}}_Z$ denotes the sheaf of algebraic functions. 
	\end{definition}
	
	\begin{theorem}
		\label{thm::algebraicStructureFromContraction}
		Let $X$ be the product of two closed algebraic cone manifolds and let $\gamma$ be an automorphism of $X$ which is a contraction on both factors.
		Then $X$ admits an algebraic structure such that the regular functions are precisely the $\gamma^*$ finite ones. 
	\end{theorem}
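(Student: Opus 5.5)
Write $\gamma=(\gamma_1,\gamma_2)$ with $\gamma_i$ a contraction of $X_i$. The plan is to reduce to the one-factor statement of \cite[Theorem~2.9]{ornea_verbitsky::algebraic_cones}: for a single closed algebraic cone $X_i$ with contraction $\gamma_i$, the $\gamma_i^*$-finite holomorphic functions form a finitely generated $\C$-algebra $R_i$. Moreover, $\Spec R_i$ is an affine variety biholomorphic to $X_i$, and its sheaf of regular functions corresponds to $R_i$. So $X_i$ carries an algebraic structure $\mathcal Z_i$ with $\mathcal O^{\alg}(X_i)=R_i$. On the product I take the product algebraic structure: $Z=\Spec(R_1\otimes_\C R_2)=Z_1\times Z_2$ is affine, hence quasi-projective. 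The map $f=f_1\times f_2$ is a biholomorphism, and the regular functions on $X$ are exactly $R_1\otimes R_2$, viewed as holomorphic functions $\sum g_k(x_1)h_k(x_2)$. It then remains to show that
$$
\{ u\in H^0(X,\mathcal O_X) : u \text{ is } \gamma^*\text{-finite}\} = R_1\otimes R_2 .
$$

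The inclusion $\supseteq$ is easy. If $g\in R_1$ and $h\in R_2$, then $(\gamma^*)^n(g\otimes h)=(\gamma_1^*)^ng\otimes(\gamma_2^*)^nh$. This lies in $V_1\otimes V_2$, where $V_i$ is the finite-dimensional $\gamma_i^*$-invariant span. Finite sums of such elements remain $\gamma^*$-finite.

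The main obstacle is the converse inclusion $\subseteq$. Let $u$ be $\gamma^*$-finite and let $W$ be the finite-dimensional $\gamma^*$-invariant span of $u$. I would first use the one-factor theory more precisely. By the proof in \cite{ornea_verbitsky::algebraic_cones}, $\gamma_i^*$ acts on $H^0(X_i,\mathcal O)$ with $R_i$ dense. Indeed, $R_i$ is spanned by generalized eigenvectors with eigenvalues $|\lambda|<1$. Any holomorphic function decomposes (via Taylor-type expansion in a Jordan basis adapted to $\gamma_i^*$, or via the Zariski closure of $\langle\gamma_i\rangle$, which contains a compact torus) into a convergent series $\sum_\alpha u_\alpha$ of generalized eigencomponents. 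Each eigenvalue has a finite-dimensional generalized eigenspace, and there are only finitely many eigenvalues of modulus $\geq\varepsilon$. The Fréchet space $H^0(X_1\times X_2,\mathcal O)$ is the completed tensor product $H^0(X_1)\hat\otimes H^0(X_2)$. So $u$ expands as a convergent series $\sum u_{\alpha\beta}$, where $u_{\alpha\beta}\in E^1_\alpha\otimes E^2_\beta$ is the product of the generalized eigenspaces for $\lambda_\alpha,\mu_\beta$. Projection to each $E^1_\alpha\otimes E^2_\beta$ is continuous and commutes with $\gamma^*$.

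Now restrict $\gamma^*$ to $W$. It has finitely many eigenvalues $\nu_1,\dots,\nu_r$, and $P(\gamma^*)u=0$ for $P=\prod(t-\nu_j)^{m}$. Applying the continuous projections gives $P(\gamma^*)u_{\alpha\beta}=0$. On $E^1_\alpha\otimes E^2_\beta$ the only eigenvalue of $\gamma^*$ is $\lambda_\alpha\mu_\beta$. Hence $u_{\alpha\beta}=0$ unless $\lambda_\alpha\mu_\beta\in\{\nu_j\}$. Since $|\lambda_\alpha|,|\mu_\beta|\to0$ along the enumeration and all moduli are below $1$, for each fixed $\nu_j\neq0$ only finitely many pairs $(\alpha,\beta)$ satisfy $\lambda_\alpha\mu_\beta=\nu_j$. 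The value $0$ never occurs, because $\gamma$ is invertible. Thus $u$ is a finite sum of elements of $E^1_\alpha\otimes E^2_\beta\subset R_1\otimes R_2$.

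The delicate points I expect to need care with are the existence and convergence of the joint generalized-eigenvalue expansion for $\gamma_i^*$ on $H^0(X_i,\mathcal O)$, and the continuity of the projections. I would derive both from the embedding $X_i\hookrightarrow\C^{n_i}$ given by finitely many generators of $R_i$. In those coordinates $\gamma_i$ acts linearly with all eigenvalues of modulus $<1$, so holomorphic functions restrict from power series on $\C^{n_i}$. The expansion is then a regrouping of monomials by weight, and the product case is handled in the same way on $\C^{n_1+n_2}$ with the linear map $\gamma_1\oplus\gamma_2$.
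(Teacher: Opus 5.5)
Your argument is sound in outline, but at the decisive step it takes a different route from the paper.

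\textbf{The two routes.} The paper rebuilds the one-factor embeddings itself: it lifts the Hopf embedding of $C_i/\langle\gamma_i\rangle$, takes the closure and normalizes algebraically. It then places $X=X_1\times X_2$ in $\C^N$ and concludes by combining two facts. The first is \cite[Lemma~6.1]{ornea_verbitsky::algebraic_cones} for $\gamma^*$-finite functions on $\C^N$. The second is the surjectivity of $H^0(\C^N,\holo)\to H^0(X,\holo_X)$ coming from Cartan's Theorem~B. You instead take the one-factor theorem as given and put on $X$ the product structure with coordinate ring $R_1\otimes_\C R_2$. You then prove the hard inclusion spectrally, using equivariant projections onto $E^1_\alpha\otimes E^2_\beta$. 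The key count is that a nonzero $\nu$ equals $\lambda_\alpha\mu_\beta$ for only finitely many pairs.

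\textbf{What each buys.} Your route supplies exactly the equivariance that the paper's last step leaves implicit. Theorem~B gives \emph{some} holomorphic extension $\tilde u$ of a $\gamma^*$-finite $u$. But $\tilde u+g$, with $g$ vanishing on $X$, is another extension, and nothing forces the chosen one to be $\gamma^*$-finite on $\C^N$. Your projections never need an equivariant lift. The paper's route is shorter and self-contained about the single-factor embedding.

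\textbf{Details to handle when writing it out.}

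First, choose generators of $R_i$ that vanish at the apex and span a $\gamma_i^*$-stable subspace. Then $\gamma_i$ becomes a linear contraction $A_i$; arbitrary generators only make it polynomial.

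Second, define $u_{\alpha\beta}$ as isotypic components for the compact part $K$ of $\overline{\langle A_{1,s}\rangle}\times\overline{\langle A_{2,s}\rangle}$, where $A_{i,s}$ are the semisimple parts. Since $X$ is algebraic and $A_1\oplus A_2$-invariant, it is $K$-invariant. So the averages are well defined on $\holo(X)$, continuous and commute with $\gamma^*$, and by the embedding they land in $E^1_\alpha\otimes E^2_\beta$. Fourier uniqueness on $K$ then replaces any convergence statement: $u$ minus the finite sum of its nonzero $u_{\alpha\beta}$ has all components zero, hence vanishes.

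Third, constants have eigenvalue $1$, so the moduli are $\le 1$ rather than $<1$. The finiteness count is unaffected, because $|\lambda_\alpha\mu_\beta|=|\nu_j|$ still forces $|\lambda_\alpha|,|\mu_\beta|\ge|\nu_j|$.
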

	
	To prove \cref{thm::algebraicStructureFromContraction}, we exploit the behaviour of LCK manifolds with potential with respect to Hopf manifolds.
	
	\begin{definition}
		\label{def::HopfManifolds}
		Let $\varphi \in \End(\C^n)$ be a linear automorphism. If $\varphi$ is a contraction i.e. all eigenvalues of $\varphi$ have norm $< 1$, 
		then the quotient $H := (\C^n \setminus \{ 0 \})/ \langle \varphi \rangle$
		is called a \cdef{linear Hopf manifold}.
	\end{definition}
	
	\begin{theorem}[{\cite[Theorem~3.4]{ornea_verbitsky::2010LCKwithPotential}}]
		\label{thm::holoEmbeddingLCKHopf}
		A compact LCK manifold with potential admits a holomorphic embedding to a linear Hopf manifold.
	\end{theorem}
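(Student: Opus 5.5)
The plan is to pass to the $\Z$-cover, complete it to a closed algebraic cone with a contraction $\gamma$, and embed that cone $\gamma$-equivariantly into $\C^n$ using finitely many $\gamma^*$-finite holomorphic functions. The quotient of this embedding by $\Z$ is then the required embedding into a linear Hopf manifold.

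\emph{Reduction to a proper potential.} Let $M$ be compact LCK with potential, with Kähler cover $\widetilde M$, potential $\varphi$, and deck group acting on $\varphi$ by characters. First I replace the potential by a proper one on the same complex manifold $M$. To do this, I perturb the Lee class $[\theta]$ (equivalently, the character $\chi$ of the deck group) to a nearby rational class. Concretely, I write the LCK form as $\omega = d_\theta d^c_\theta \psi$ on $M$ and change $\theta \mapsto \theta + \epsilon\alpha$ with $\alpha$ a small closed $1$-form. For $\epsilon$ small, positivity of $d_{\theta'}d^c_{\theta'}\psi$ is an open condition, so the result is still an LCK form with potential. Choosing the new class rational makes the image of the character cyclic. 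Hence the minimal cover is a $\Z$-cover, and the potential is proper by \cref{def::lckProperPotential}. Since the embedding statement involves only the complex structure of $M$, I may assume from now on that $\widetilde M$ is a $\Z$-cover with deck generator $\gamma$ that decreases $\varphi$.

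\emph{Passage to the cone.} By \cref{thm::algebraicConesAreZCoversLCK}, the Stein completion $X = \widetilde M \cup \{x\}$ is a closed algebraic cone, and $\gamma$ extends to a holomorphic contraction of $X$ to $x$. Since $X$ is Stein and $\varphi$ is proper, I choose a $\gamma$-invariant relatively compact Stein neighbourhood $U \ni x$ with $\gamma(\overline U) \subset U$, for instance a sublevel set $\{\varphi < c\} \cup \{x\}$. The operator $\gamma^*$ acts on the Banach space $B$ of bounded holomorphic functions on $U$. By Montel's theorem it is compact, because it factors through restriction to $\gamma(U) \Subset U$. Riesz theory then gives a discrete spectrum accumulating only at $0$, with finite-dimensional generalized eigenspaces $V_\lambda$, and the functions in each $V_\lambda$ are $\gamma^*$-finite. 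For $\lambda \neq 1$, every $f \in V_\lambda$ satisfies $f(x)=0$. Moreover $|\lambda|<1$ for every eigenvalue on the maximal ideal, because $f(\gamma^k y) \to f(x) = 0$ for each $y$. Since $X$ is Stein and $\gamma$ is a contraction, functions on $U$ propagate to all of $X$ via $f \mapsto (\gamma^{-k})^*(\gamma^{*})^{k} f$. Hence the $\gamma^*$-finite functions extend to global functions on $X$ satisfying the same linear relations.

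\emph{The finite set of functions.} Fix $r>0$ and let $V = \bigoplus_{|\lambda| \geq r} V_\lambda \cap \mathfrak m_x$. This space is finite-dimensional and $\gamma^*$-invariant. I set $\Psi\colon X \to V^* \cong \C^n$ to be the evaluation map. By construction $\Psi \circ \gamma = A \circ \Psi$, where $A = (\gamma^*|_V)^{\top}$ has all eigenvalues in the open unit disc, so $A$ is a linear contraction. It remains to choose $r$ so that three things hold. First, $\Psi^{-1}(0) = \{x\}$. Second, $\Psi$ is injective and immersive on a compact fundamental annulus $K = \overline{U} \setminus \gamma(U)$. Third, $\Psi(K) \cap A^k\Psi(K) = \emptyset$ for $k$ large. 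By equivariance these give an injective immersion $\widetilde M \to \C^n \setminus \{0\}$ that descends to a closed embedding $M \to (\C^n \setminus \{0\})/\langle A\rangle$, a linear Hopf manifold as in \cref{def::HopfManifolds}.

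\emph{Main obstacle: density.} The main obstacle is to show that the $\gamma^*$-finite functions are dense enough to separate points and tangents on the compact set $K$. I would argue as follows. The quotient $B/\overline{\bigoplus_\lambda V_\lambda}$ carries an induced compact operator with spectral radius $0$, that is, a quasinilpotent operator. On the other hand, $\gamma^*$ multiplies $\mathfrak m_x^N$ by at most $C\rho^N$ in norm, with $\rho<1$ coming from the contraction. Since $X$ is Stein, its functions separate points and tangents. Taking $N$ large, the finitely many functions needed to separate points and tangents on the compact $K$ can be approximated modulo $\mathfrak m_x^N$ by elements of finitely many $V_\lambda$. Separation is an open condition on compact sets, so these approximations still separate points and tangents on $K$. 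The case of the point $x$ itself is treated by taking generators of $\mathfrak m_x/\mathfrak m_x^2$ among the $V_\lambda$, so that $\Psi$ is a local embedding of the cone near $x$ in the weak sense needed.
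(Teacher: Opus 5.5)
The paper does not prove this statement; it only quotes it from Ornea--Verbitsky. So your argument has to stand on its own, and it breaks down exactly at the step you flag as the main obstacle.

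What your spectral set-up does give is the following. For each $N$, every $f\in B$ is congruent modulo $\mathfrak m_x^N$ to an element of $\bigoplus_{|\lambda|\geq\epsilon_N}V_\lambda$. Indeed, the eigenvalues of $\gamma^*$ on the finite-dimensional space $B/(B\cap\mathfrak m_x^N)$ are products of fewer than $N$ eigenvalues of $\gamma^*$ on $\mathfrak m_x/\mathfrak m_x^2$, hence bounded away from $0$. So the Riesz summand of spectral radius $<\epsilon_N$ lies in $\mathfrak m_x^N$.

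But congruence modulo $\mathfrak m_x^N$ only constrains the $N$-jet at the single point $x$. It gives no control of values or derivatives on $K$, which stays away from $x$, so ``separation is an open condition'' cannot be invoked. Turning it into uniform smallness on $K$ would require, even granting $\|\gamma^*|_{\mathfrak m_x^N}\|\leq C\rho^N$, bounds on the Riesz projections as $\epsilon_N\to 0$, and you have none. Nor can you aim for density in $B$: for $z\mapsto z/2$ on the unit disc the finite functions are the polynomials, whose sup-norm closure is the disc algebra, not $H^\infty$.

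The quasinilpotence of the operator $\overline T$ induced by $\gamma^*$ on $B/\overline W$, $W:=\bigoplus_\lambda V_\lambda$, is the right tool, but it must be played against a \emph{lower} bound. Suppose no finite function separates $y\neq z$ in $U$. Then $\ell:=\delta_y-\delta_z$ vanishes on $\overline W$, so $|f(\gamma^k y)-f(\gamma^k z)|=|\ell((\gamma^*)^k f)|\leq\|\bar\ell\|\,\|\overline T^k\|\,\|f\|$ decays faster than any exponential for every $f\in B$. On the other hand, take global $f_1,\dots,f_m$ minimally generating $\mathfrak m_x$ (they exist since $X$ is Stein). Near $x$, $\gamma$ extends to a holomorphic $\Gamma\colon(\C^m,0)\to(\C^m,0)$ whose differential at $0$ is dual to $\gamma^*$ on $\mathfrak m_x/\mathfrak m_x^2$, hence invertible. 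Thus $|\Gamma(p)-\Gamma(q)|\geq c|p-q|$ near $0$, and $|F(\gamma^k y)-F(\gamma^k z)|\geq c'c^k$ for $F=(f_1,\dots,f_m)$, a contradiction.

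The same argument with $\ell(f)=f(y)-f(x)$ and $\ell(f)=df_y(\zeta)$ gives $\Psi\neq 0$ on $\widetilde M$ and separation of tangents. Compactness then yields finitely many finite functions, hence $r$, and the Schwarz-type bound is not needed at all.

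There are also two smaller points and one structural one.
\begin{itemize}
\item With $K=\overline U\setminus\gamma(U)$ you have $K\cap\gamma(K)=\partial\gamma(U)\neq\emptyset$. So injectivity on $K$ plus $\Psi(K)\cap A^k\Psi(K)=\emptyset$ for large $k$ does not give injectivity on $\widetilde M$. You need injectivity on $K\cup\gamma(K)\cup\dots\cup\gamma^{k_0}(K)$, where $A^k\Psi(K)$ misses $\Psi(K)$ for $k>k_0$. This is harmless once separation holds on arbitrary compacts.
\item The extension formula $(\gamma^{-k})^*(\gamma^*)^k f$ is literally $f$. You mean $\bigl((\gamma^*|_{V_\lambda})^{-k}f\bigr)\circ\gamma^k$ on $\gamma^{-k}(U)$.
\item More seriously, you take the Stein completion $X$ from \cref{thm::algebraicConesAreZCoversLCK}. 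The paper obtains that theorem by citing results on algebraic cones which are themselves derived from this very embedding theorem, so as a proof of the statement this is circular. The existence of the one-point Stein completion of the $\Z$-cover has to be supplied independently, by filling the pseudoconcave end via Rossi/Andreotti--Siu. That is the genuinely hard analytic input, and it is where the original source restricts to $\dim_\C M\geq 3$.
\end{itemize}
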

	
	\begin{proof}[{Proof of \cref{thm::algebraicStructureFromContraction}}]
		We adapt the proof of \cite[Theorem~6.3]{ornea_verbitsky::algebraic_cones} to our situation.
		Let $X_i$ be a closed algebraic cone and $M_i$ be an LCK manifold with potential for which $C_i$, the open algebraic cone of $X_i$, is a $\Z$-cover (\cref{thm::algebraicConesAreZCoversLCK}). Let $M_i \rightarrow H_i$ the holomorphic embedding guaranteed by \cref{thm::holoEmbeddingLCKHopf}. Since the deck group of $C_i \rightarrow M_i$ is $\Z$, the embedding lifts to $f: C_i \rightarrow \C^n \setminus \{ 0\}$. The Remmert-Stein theorem guarantees that the closure $C_i^{\textrm{an}}$ of $f(C_i)$ in $\C^n$ is an analytic variety, thus a weak Stein completion of $C_i$; however, it doesn't necessarily coincide with the Stein completion $X_i$ (see \cite[Sections~4~and~8]{ornea_verbitsky::algebraic_cones}). 
		But by \cite[Theorem~6.2]{ornea_verbitsky::algebraic_cones}, $C_i^{\textrm{an}}$ is an algebraic subvarity. We can then consider the algebraic normalization $\widetilde{C_i^{\textrm{an}}}$ of $C_i^{\textrm{an}}$. This can be explicitly obtained by taking the integral closure of the ring of algebraic functions of $C_i^{\textrm{an}}$ in its field of fractions; this integral closure is guaranteed to be a finitely generated $\C$-algebra by Emmy Noether's finiteness theorem, \cite[Corollary~13.13]{eisenbud::commutativeWithView}.
		By \cite[Theorem~II.7.3]{demailly::complexAnalyticDiffGeomBook}, $\widetilde{C_i^{\textrm{an}}}$ is, as an analytic variety, the geometric normalization of $C_i^{\textrm{an}}$. But then, by \cref{rmk::stein_completion}, $\widetilde{C_i^{\textrm{an}}}$ is biholomorphic to $X_i$. 
		
		We obtain in this way a holomorphic embedding of $X_i$ into $\C^{n_i}$ such that the image is an algebraic variety.
		Thus the analytic variety $X = X_1 \times X_2$ embeds into some $\C^N$, where it obtains the structure of an algebraic variety.
		Clearly the algebraic functions on $X$ are precisly the holomorphic functions on $X$ which are polynomial. By \cite[Lemma~6.1]{ornea_verbitsky::algebraic_cones}, since (the lift of $\gamma$ to $\C^N$ is a contraction), $\gamma^*$-finite functions on $\C^N$ are the same as polynomial functions. 
		
		Thus, it suffices to prove that a $\gamma^*$-finite function can be extended to $\C^N$. For this, we consider $I_X$ the sheaf of analytic functions vanishing on $X_i$ seen as an analytic subvariety in $\C^N$. We then have a short exact sequence of analytic sheaves:
		\begin{equation}
			\label{eqn::shortExactSeqIdealSheafSubvarC^N}
			0 \rightarrow I_{X}
			\rightarrow \mathcal{O}_{\C^N}
			\rightarrow \mathcal{O}_{X}
			\rightarrow 0
		\end{equation}
		
		Since $\C^N$ is Stein and $I_{X}$ is coherent by Oka's coherence theorem (\cite[Chapter~II,~5.2]{grauert_remmert::coherentAnalyticSheaves}), Cartan's Theorem B (\cite[Chapter~VII,~A.Theorem~14]{gunning_rossi::analyticFunctions}) entails that 
		${H^1(\C^N, I_X) = 0}$
		. Then the long exact sequence in cohomology given by \cref{eqn::shortExactSeqIdealSheafSubvarC^N} gives a surjective map
		$$
		H^0(\C^N, \mathcal{O}_{\C^N}) \rightarrow H^0(\C^N, \mathcal{O}_X)
		$$
		which completes the proof.
	\end{proof}

\section{Holomorphic tensors on the product of algebraic cones}
\label{sec::holom_tensors_prod_alg_cones}

Let $X_1$ and $X_2$ be two closed algebraic cone manifolds, and $C_1$, $C_2$ be their corresponding open cones. Assume throughout this section that $\dim C_i > 2$.
As before denote $X:= X_1 \times X_2$ and $C := C_1 \times C_2$.
Suppose that $G = \C$ acts biholomorphically on $C$ such that $G$ contains a $\gamma$ which can be extended to a contraction on each $X_i$ at the point $X_i \setminus C_i$.
Consider $X$ as equipped with an algebraic structure determined by $\gamma$-finite functions (\cref{thm::algebraicStructureFromContraction}).

\begin{theorem}
	\label{thm::holomorphicTensorsProdConesFromBase}
	Let $X$ be the product of two closed algebraic cone manifolds equipped with a $G=(\C, +)$-action containing an element $\gamma$ which is a contraction on both factors. Let $C$ be the product of the open cones, $\pi: C \rightarrow C/G$ be the quotient map, and $\mathcal T$ any holomorphic tensor bundle of on $C/G$. 
	
	Suppose each cone participating in the product has dimension $\geq 2$.
	
	Then any holomorphic section of $\pi^* \mathcal T$ is invariant under the Zariski closure of $\langle \gamma \rangle$.
	
	\begin{proof}
		Denote by $S:= X \setminus C$ the singular locus of $X$. 
		
		\newProofStep
		\label{step::extensionOfVectorFieldGivingAction} 
		Let $V$ be the vector field whose flow generates the $G$-action. We show that $V$ extends to a vector field on $X$.
		To begin with, the tangent sheaf $T_X$ is reflexive, since it is the dual of the sheaf of K\"ahler differentials, which is coherent by \cite[Chapter~2.10,~Lemma~p.87~and~description~on~p.88]{fischer::ComplexAnalyticGeometry}. $X$ is normal, being a product of normal varieties.
		Since also $\codim S \geq 2$, we can apply \cite[Sec.~3,~Remarque~2~after~Proposition~4]{serre::prolongement} to $T_X$ to obtain that $\Gamma(X, T_X) \simeq \Gamma(X \setminus S, T_X)$. This shows that $V$ extends to a section of $T_X$ on the whole product $X$.
		
		\newProofStep Let $\mathcal T$ be a holomorphic tensor bundle on $C/G$.
		 We show that $\pi^* \mathcal T$ extends to a coherent sheaf on $X$ and, in fact, that
		 if $\iota: X \setminus S \rightarrow X$ denotes the inclusion, then
		 $\iota_* \pi^* \mathcal T$ is coherent.
		 Note first that we have the following exact sequence:
		 $$
		 0 \rightarrow 
		 		\mathcal{O}_{X \setminus S} \cdot V 
		 	\rightarrow 
		 		T_{X \setminus S} 
		 	\rightarrow 
		 		\pi^* T_M 
		 	\rightarrow 0
		 $$
		Consider $V$ extended to the whole $X$ as in \cref{step::extensionOfVectorFieldGivingAction}. 
		Then by the exact sequence above, 
		$\mathcal{F} := \frac{T_X}{{\mathcal{O}_X} \cdot V}$
		is a sheaf on $X$ which extends $\pi^* T_M$. But $\mathcal{F}$ is a quotient sheaf of a coherent sheaf by a coherent sheaf (\cite[Chapter~2,~Section~5.3,~Proposition~on~pg.~60]{grauert_remmert::coherentAnalyticSheaves}), which is therefore coherent
		(\cite[Appendix.4.2,~Consequence~2]{grauert_remmert::coherentAnalyticSheaves}).
		
		Because tensor products of coherent sheaves are coherent
		(\cite[Appendix.4.4]{grauert_remmert::coherentAnalyticSheaves}), 
		$\pi^* T_M^{\otimes k}$ admits a coherent extension, which must be precisely $\iota_* \pi^* T_M^{\otimes k}$ by \cite[Théorème~1]{serre::prolongement} since $X$ is normal.
		
		It remains to see that $\iota_* \pi^* (T_M^*)^{\otimes l}$ is coherent, which amounts to showing that $\pi^* T_M^*$ has a coherent extension. But the sheaf on $X$ defined as the kernel of the contraction with $V$,
		$$
		\ker \left( i_V : \Omega_X^1 \rightarrow \mathcal{O}_X \right)
		$$
		extends $\pi^* T_M^*$ and is coherent because it is the kernel of a morphism between two coherent sheaves (\cite[Appendix.4.2,~Consequence~2]{grauert_remmert::coherentAnalyticSheaves})
		This together with the theorem of Serre shows that also $\iota_* \pi^* T_M^*$ is coherent. 
		
		\textbf{Step 3.} Let $s$ be a holomorphic section of $\pi^* \mathcal{T}$. 
		Then $s$ is $G$-invariant, thus in particular $\langle \gamma \rangle$-invariant, so by \cref{lemma::sectionsCoherentSheavesProductConesAreInvariant} applied to the coherent sheaf $\iota_* \pi^* \mathcal{T}$, $s$ is invariant under the Zariski closure of $\langle \gamma \rangle$.
	 
	\end{proof}
\end{theorem}

\begin{lemma}
	\label{lemma::sectionsCoherentSheavesProductConesAreInvariant}
	Let $\mathcal{F}$ be an analytic coherent sheaf on a product $X$ of two closed algebraic cone manifolds equipped with a contraction $\gamma$ to the product of the apexes.
	Then any $\gamma$-invariant section of $\mathcal{F}$ is invariant under the Zariski closure of $\langle \gamma \rangle$.
	
	\begin{proof}
		
		Recall that for $x \in X$, the space of $k$-jets of $\mathcal{F}$ is by definition $\mathcal{F}_x^k := \frac{\mathcal{F}_x}{\mathfrak{m}_x^k \mathcal{F}_{x}}$, where $\mathcal{F}_x$ denotes the space of germs of sections of $\mathcal{F}$ at $x$, and $\mathfrak{m}_x$ denotes the maximal ideal of holomorphic functions vanishing at $x$. There are natural maps $\mathcal{F}_x^{k+1} \rightarrow \mathcal{F}_x^k$, giving an inverse system, of which we denote the inverse limit by $\widehat{\mathcal{F}_x}$ (\cite[Chapter~10]{atiyah_macdonald::introduction_commutative}).
		
		Let now $x \in X$ be the point $(0_{X_1}, 0_{X_2})$ where $\{ 0_{X_i} \}$ is the singular set of $X_i$. Denote by $H^0_\gamma (X, \mathcal{F})$ the $\gamma$-invariant sections. We have natural maps:
		
		\begin{center}
			\begin{tikzcd}
				H^0_{\gamma}(X, \mathcal F)
				\ar[r]
				\ar[rr,out=-30,in=210,swap,"\Phi"] & 
				\mathcal F_x 
				\ar[r] & 
				\widehat {\mathcal F_x} 
			\end{tikzcd}
		\end{center}
		
		Observe that the first map from left to right is injective. Indeed, let $s \in H^0_{\gamma}(X, \mathcal F)$ having vanishing germ at $x$ and suppose by absurd that $\supp(s)$ is nonempty; say $y \in \supp (s)$. Since $\supp(s)$ is closed, we can apply the definition of $\gamma$ being a contraction, \cref{def::contraction}, to the compact set $\{ y \}$ and the open set $X \setminus \supp (s)$, which contains $s$ because $s_x = 0$, to obtain that $\gamma^n(y) \in X \setminus \supp(s)$ for large enough $n$. But since $s$ is $\gamma$-invariant, $\supp(s)$ is $\gamma$-invariant, so $y^n \in \supp(s)$ for any $n$, a contradiction. 
		
		Now we explain why the second map from left to right is injective. It is known that $\mathcal{O}_{X, x}$ is Noetherian (\cite[Ch.~2,~Sec.~B,~Theorem~9]{gunning_rossi::analyticFunctions}).
		Furthermore, the $\mathcal{O}_{X,x}$-module $\mathcal{F}_x$ is finitely generated because the sheaf $\mathcal F$ is coherent. Therefore, we can apply
		the corollary of Krull's theorem \cite[Corollary~10.19]{atiyah_macdonald::introduction_commutative} for the maximal ideal $\mathfrak{m}_x$ in the Noetherian local ring $\mathcal{O}_{X,x}$, guaranteeing that $\bigcap_k \mathfrak{m}_x^k \mathcal{F}_x = \{ 0 \}$ i.e. the second map from left to right is injective as well.
		
		Let now $s \in H^0_\gamma(X, \mathcal{F})$. Then for any $k$, the $k$-jet of $s$, $s^k_x$, is a $\gamma$-invariant vector in the finite-dimensional space $\mathcal{F}^k_x$; but $\stab(s^k_x)$ is the preimage of the closed point $\{s^k_x\}$ under the orbit morphism $g\mapsto g s^k_x$, hence Zariski-closed. Since it contains $\gamma$, it contains
		$\zarClosure{\gamma}$. Hence $\Phi(s)$ is invariant under the Zariski closure, and thus, by injectivity of $\Phi$, so is $s$ itself.
	\end{proof}
\end{lemma}

\section{Sasaki Manifolds}
\label{sec::sasakianManifolds}

The goal of \cref{sec::sasakianManifolds} and \cref{sec::sectionRingAmpleness} is to gather supportive material and results for \cref{thm::holomorphicTensorsProdSasaki}.

\begin{definition}
	\label{def::Sasaki}
	Let $(S,g)$ be an odd-dimensional Riemannian manifold and $(C(S):=(S \times \R^{>0}, g_{C(S)}= dr \otimes dr + r^2 g)$, $r\in\R^{>0}$, its Riemannian cone. 
	We say $S$ is a \cdef{Sasaki manifold} if there exists a K\"ahler structure $(J, \omega, g_{C(S)})$ on $C(S)$ such that the homothety map $h_\lambda: C(S) \rightarrow C(S)$, $h_\lambda(p,r):= (p, \lambda r)$ is holomorphic and satisfies $h_\lambda^* \omega = \lambda^2 \omega$ for each $\lambda \in \R^{>0}$.
\end{definition}

We denote by $E := r \frac{d}{dr}$ the \cdef{Euler field} on $C(S)$ and by $\xi:= JE$ the \cdef{Reeb field}. We also denote by $\xi$ the vector field $\xi\rvert_{r=1}$ on $S$ seen as $S \times \{1 \} \subset C(S)$.
 
 For a Sasaki manifold $S$, define an action of $(\C, +)$ on $C(S)$ by putting for $z \in \C$:
\begin{equation}
	\label{eqn::defActionCone}
 z \cdot p := \phi_1^{\Re(z) E + \Im(z) \xi} (p)
\end{equation}
where (in \eqref{eqn::defActionCone} and henceforth) $\phi_t^X$ denotes the flow of the vector field $X$ at time $t$.

\begin{definition}
	\label{def::quasiregReeb}
	Let $S$ be a Sasaki manifold. We say that a Reeb field $\xi$ is \cdef{quasiregular} if the orbits of $\xi$ are compact.
\end{definition}
	
\begin{remark}
	\label{rmk::wadsley}
	In \cite{wadsley::geodesicFoliationsCircles}, Wadsley gives necessary and sufficient condition for a foliation with compact leaves to come from a circle action (\cite[Theorem~2.6.12]{boyer_galicki::SasakianGeometryBook}).
	In fact, Wadsley's proof shows more, namely that the fundamental field of the action can be taken to be a normalized field with respect to a contact form (in particular, the Reeb field on a Sasaki manifold) and for such a field $\xi$ there exists a common time $T_0$ such that $\phi^\xi_{T_0} = \identity$. This follows from the fact that the least period map is constant on any connected component of the complement of its discontinuity set (\cite[Lemma~4.3]{wadsley::geodesicFoliationsCircles}), the fact that for such a constant $c$ in the image of the period map we have $\phi^\xi_c = \identity$ on the set where the period map is locally unbounded (\cite[Proposition~4.5]{wadsley::geodesicFoliationsCircles}), as well as the fact that this latter set is empty (\cite[Lemma~4.8]{wadsley::geodesicFoliationsCircles}).
\end{remark}

\begin{remark}
	\label{rmk::C^*ActionQuasiregCone}
	By rescaling the quasiregular $\xi$, we will always assume $T_0$ from \cref{rmk::wadsley} is equal to $2\pi$. Then from the action \eqref{eqn::defActionCone} we obtain a $\C^*$ action on the open cone $C$ by:
	\[
		a(\exp(t), p) := t \cdot p
	\]
	which is independent of the choice of the logarithm because $T_0 = 2\pi$.\footnote{
		It is interesting to contrast this with the more general result stating the existence of a logarithm of a power of the contraction generating the deck group of an LCK manifold with proper potential (\cref{def::lckProperPotential}), namely \cite[Theorem~14.6]{lck_book}. Remarkably, the restriction to powers is unavoidable even in the Vaisman case (\cite[Example~14.8]{lck_book}).
	}
\end{remark}

\begin{remark}\label{rmk::constructionOfConeOrbifoldChart}
	Since a Reeb vector field vanishes nowhere, the isotropy groups of the $S^1$ action discussed in \cref{rmk::wadsley} are finite cyclic. For a quasiregular (\cref{def::quasiregReeb}) Reeb field $\xi$, this translates to the fact that the local uniformising groups, denoted $\Gamma_p$ for $p \in S$, of the quotient orbifold $Y:= S/\langle \xi \rangle$ (\cite[Theorem~7.1.3]{boyer_galicki::SasakianGeometryBook}) are finite cyclic.
	
	There is a natural way to construct orbifold charts for $Y$ starting from the open cone $C$, which we will need and we'll recall now briefly, adapting \cite[Chapter~II,~Theorem~5.4]{bredon::introductionCompactTransformationGroups} for the complex holomorphic case.
	Let $p \in C$. Take a $\Gamma_p$ invariant metric on $T_pC$. With respect to it, we take $N \subset T_pC$ the orthogonal complement of the tangent space at $p$ to the orbit of $p$. Taking a $T_p C$-valued holomorphic chart around $p$ and averaging it with the right actions of $\Gamma_p$, we obtain after shrinking the chart domain a $\Gamma_p$-equivariant chart $\phi$ and we put $V_p := \phi^{-1}(N \cap B_1(0))$ where $B_1(0)$ is a ball in $T_pC$.

	The key point is that up to shrinking $V_p$, $\C^* \times_{\Gamma_p} V_p:= (\C^* \times V_p)/\Gamma_p$ are local tubular neighborhoods of the orbit of $p$. Indeed, $\tau: \C^* \times_{\Gamma_p} V_p \to C$, $\tau([z,x]) := z\cdot x$, is a well defined $\C^*$-equivariant holomorphic map surjecting on $\C^* V_p$. Using that $d \tau_{[1,p]}$ is bijective and the properness of the $\mathbb C^*$-action, it can be seen that, up to shrinking $V_p$, $\tau$ is also injective and $\Gamma_{p'}\subseteq\Gamma_p$ for every $p'\in V_p$. 
	
	Denoting $\pi: C \rightarrow Y$, $\pi|_{V_p}$ induces a homeomorphism
	$V_p/\Gamma_p\xrightarrow{\sim}U_p:=\pi(\C^* V_p)$;
	and it can be seen that the triples $\{(V_p,\Gamma_p,\pi\rvert_{V_p})\}_{p \in \C}$ constitute an orbifold atlas (\cite[Definition~4.1.1]{boyer_galicki::SasakianGeometryBook}); these are the preferred charts we will work with.
\end{remark}

\begin{remark}
	\label{rmk::lineOrbibundlesDef}
	On the local uniformising charts $(V_p, \Gamma_p)$ from \cref{rmk::constructionOfConeOrbifoldChart}. Define 
	$B^L_{V_p}:=V_p\times\C\ \xrightarrow{\,\mathrm{pr}_1\,}\ V_p$ 
	and $h_{V_p}^L: \Gamma_p \hookrightarrow GL(\C) \simeq \C^*$ the natural embedding, with the $\Gamma_p$ action on $V_p \times \C$ given by $\gamma (x, v) := (\gamma^{-1}x, \chi(\gamma) v)$. By a routine check, these glue to a line orbibundle (\cite[Definition~4.2.7]{boyer_galicki::SasakianGeometryBook}) $L$.
	
	Similarly, $B_{V_p}^C := V_p \times \C^* \xrightarrow{\,\mathrm{pr}_1\,}\ V_p$ with $h^C_{V_p} = h^L_{V_p}$ with the same action of $\Gamma_p$ as on $B^L_{V_p}$ give a principal $\C^*$-orbibundle.
	Due to the identification $\tau$ from \cref{rmk::constructionOfConeOrbifoldChart}, the total space of this principal $\C^*$-orbibundle over $U_p$ is
	$
	\Tot \rvert_{U_p} =  V_p \times_{\Gamma_p} \C^*
	\xrightarrow{\stackrel{\tau}{\sim}}
	 \C^* V_p \subset C
	 $
	 Therefore, the total space of this bundle is the open cone $C$, thus we also call it $C$. By construction $\Fr(L) = C$ and so $\Tot^\circ(L) = C$, where $\Tot^\circ$ denotes the total space without the $0$ section.
\end{remark}

\section{The ring of sections and ampleness}
\label{sec::sectionRingAmpleness}

With the notations from \cref{sec::sasakianManifolds}, we study now the central objects of \cref{sec::holomTensorsProdSasaki}, namely the rings:

\begin{equation}
	\label{eqn::gradedRingsSections}
	R^m := H^0(Y, (L^*)^{\otimes m}), \quad R := \sum_{m = 0}^\infty R^m
\end{equation}

Importantly, in \eqref{eqn::gradedRingsSections} sections are taken in the orbibundle sense (\cite[Definition~4.2.9]{boyer_galicki::SasakianGeometryBook}).
Consider also the sheaves on the complex space $Y$ defined as:
\begin{equation}
	\label{eqn::sheavesInvariantFunctions}
	\mathcal{F}_m(U)
	:=
	\{f\in\mathcal{O}(C|_{U}):f(\lambda v)=\lambda^mf(v)\ \text{on}\ C|_U\}
\end{equation}
and the corresponding graded ring of homogeneous functions on the open cone $C$:

\begin{equation}
	\label{eqn::gradedRingsFunctions}
	 A_m := 
	 H^0(Y, \mathcal{F}_m)
	 , \quad A:= \sum_{m=0}^\infty A_m
\end{equation}

Work on the level of the orbifold charts $(V_p, \Gamma_p, U_p)$ from \cref{rmk::constructionOfConeOrbifoldChart}. 
On $V_p$ a section of $(L^*)^{\otimes m}$ is by construction of $L$ (\cref{rmk::lineOrbibundlesDef}) a family of holomorphic functions
$\widehat s_p \in \holo(V_p)$ 
satisfying:
\begin{equation}
	\label{eqn::equivarianceSectionsOrbibundle}
	\widehat s_p(\gamma\cdot x) = h^L_{V_p}(\gamma)^{-m} \widehat s_p(x)
	\quad
	\forall \gamma \in \Gamma_p,\ x \in V_p
\end{equation}
and compatible under the orbifold injections.

The following dual perspective on $R$ will be indispensable.
\begin{lemma}
	\label{lemma::sectionRingsAreFunctions}
	
	The mappings:
	\begin{align*}
		ev : 
		\{\widehat s\in\mathcal{O}(V_p):\widehat s\ \text{satisfies \eqref{eqn::equivarianceSectionsOrbibundle}}\}
		&\rightarrow 
		A_m(V_p \times_{\Gamma_p} \C^* ) \\
		ev(\widehat s)([x, v]) &:= \widehat s (x) v^m
	\end{align*}
	are well-defined $\C$-linear
	bijections gluing to bijections $ev: H^0(Y,(L^{*})^{\otimes m})\to A_m$ for all $m \geq 0$. They assemble to a ring isomorphism $R \simeq A$.
	\begin{proof}
		The only nonstandard check is surjectivity.
		Denote $q: V_p \times \C^* \rightarrow (V_p \times_{\Gamma_p} \C^*)$ the quotient map, let $f \in A_m(V_p \times_{\Gamma_p} \C^* )$ and put $\tilde f := q^* f$. 
		
		Consider the holomorphic function $h:=\widetilde f(x,\cdot)$ on $\C^*$.
		It has a Laurent expansion $h(v)=\sum_{k\in\mathbb{Z}}a_k v^k$, convergent for all
		$v\in\C^*$.
		
		Since $h(\lambda v)=\lambda^m h(v)$, by expansion of both sides in Laurent series and uniqueness of coefficients we obtain: 
		\[
		a_k \lambda^{k} = \lambda^{m}a_k
		 \qquad \forall k \in \mathbb{Z}
		, \ \lambda\in\C^*
		\]
		which entails $a_k=0$ for $k\neq m$.  Hence $h(v)=a_m v^m$ and
		$a_m = \widetilde f(x,1)$.  Define $\widehat s(x):=\widetilde f(x,1)$, which is the restriction of the holomorphic $\widetilde f$ to the
		slice $V_p \times \{1\}$, thus holomorphic. This shows
		$\widetilde f(x,v)=\widehat s(x)v^m$; these agree on overlaps because $\tilde f$ comes from a global $f$.
	\end{proof}
\end{lemma}

By \cite[Theorem~7.1.3,~(ii)~and~(vi)]{boyer_galicki::SasakianGeometryBook} applied for the reduction of the structure group $\C^*$ of $L^*$ to $S^1$, this reduction has a connection whose curvature is the pullback of a K\"ahler form on $Y$ by the projection to $Y$. Therefore $L^*$ is positive.
By the Kodaira embedding theorem for orbifolds (\cite[p.~427,~Global~Imbedding~Theorem]{baily::VManifoldsInProjSpace}), $L^*$ is orbiample and we find an embedding $\iota: Y \rightarrow \C P^d$ with $\iota (Y)$ a normal algebraic variety and $(L^*)^{\otimes N} \simeq \iota^* \mathcal{O}(1)$ for some large $N$ dividing $\nu := \lcm ( \ord (\Gamma_p) )_{p \in S}$. 

\begin{proposition}
	\label{proposition::sectionRingFinitelyGenerated}
	
	$R$ is a finitely generated $\C$-algebra.
	\begin{proof}
		In short, the idea of the proof is that the powers of $L^*$ that are divisors of $\nu$ are actual line bundles on $Y$, while for the rest of the powers we show that, when large enough, they are finitely generated modules over the sections of the powers of $N$.
		
		Consider the sheaves $\mathcal{F}_m$ of \cref{eqn::sheavesInvariantFunctions} 
		Since $Y \subset \C P^d$ is algebraic, by 
		\cite[{n\textsuperscript{o}~12, Th.~1-3,~pp.~19-20}]{serre::GAGA}, 
		there exists a unique algebraic sheaf $\mathcal{F}_m^{\alg}$ having the same cohomology groups as $\mathcal{F}_m$, in particular the same sections.
		
		\newProofStep \label{step::secRingFG/LazarsfeldUsage}
		Consider $
		B:=R^{(N)}=\bigoplus_{k\ge0}R^{kN}
		$.
		Since $(L^*)^{\otimes kN}$ is an actual line bundle on $Y$ for $k \geq 1$ by
		\cite{baily::VManifoldsInProjSpace}, we can identify $\mathcal{F}_{kN}$ with $(L^*)^{\otimes kN}$ via \cref{lemma::sectionRingsAreFunctions}, so $\mathcal{F}_{kN}^{\alg}$ is also an ample line bundle, so
		$
		B = \bigoplus_{k \geq 0} H^0 (Y, \mathcal{F}_{kN})
		= \bigoplus_{k \geq 0} H^0 (Y, \mathcal{F}^{\alg}_{kN})
		$
		is a finitely generated $\C$-algebra by
		\cite[Ex.~2.1.30]{lazarsfeld::positivityInAlgGeomI}.
		
		\newProofStep \label{step::secRingFG/cosetModules}
		For $0 \leq a<N$ put
		$M_a:=\bigoplus_{k \geq 0} R^{a+kN}$.
		
		Since $(L^*)^{\otimes N} \simeq \iota^* \holo(1)$:
		\[
		R^{a+kN} = H^0 (Y, \mathcal{F}^{\alg}_a \otimes \holo_Y(k) )
		\]
		
		By \cite[Theorem~II.5.17]{hartshorne::algGeom} applied to the coherent sheaf $\mathcal{F}^{\alg}_a$, there is an integer $k_0$ such that 
		$\mathcal{F}^{\alg}_a \otimes \holo_Y(k_0)$ can be generated by a finite number of global sections - say $P$ such sections - which give an exact sequence
		\[
		0
		\to
			\mathcal K \to \holo_Y^{\oplus P} 
		\to 
			\mathcal{F}_a^{\alg}
			\otimes \holo_Y(k_0)\to 0
		\]
		 with
		$\mathcal K$ coherent (\cite[Appendix.4.2,~Consequence~2]{grauert_remmert::coherentAnalyticSheaves}).
		Then, twisting by $\holo_Y(k)$ and taking the long exact sequence in cohomology we obtain that the cokernel of
		\[
		H^0(Y,\holo_Y(k))^{\oplus P}\ 
		\rightarrow
		H^0\bigl(Y,\mathcal{F}_a^{\mathrm{alg}}\otimes\holo_Y(k_0+k)\bigr)
		\]
		injects into $H^1(V,\mathcal K \otimes \holo_Y(k))$. But by \cite[III.5.2(b)]{hartshorne::algGeom}, for some $k_1$ and every $k \geq k_1$ we have $H^1(V,\mathcal K \otimes \holo_Y(k)) = 0$. So for $k \geq k_1$ we have that
		$
		\bigoplus_{k \geq k_0 + k_1} 
		H^0\bigl(Y,\mathcal{F}_a^{\mathrm{alg}}\otimes\holo_Y(k))
		$ is a finitely generated $B$-module
		since by definition 
		$H^0(Y,\holo_Y(k))^{\oplus P} = (B^k)^{\oplus P} = (R^{kN})^{\oplus P}$ 
		
		Therefore, $M_a$ is
		generated as a $B$-module by the generators of $\mathcal{F}^{\alg}_a \otimes \holo(k_0)$ and the union of bases in $R^{a+jN}$ for $j \leq k_0 + k_1$.
		
		\newProofStep \label{step::secRingFG/finiteGenerationEachPiece}
		
		Since $R^{a+jN}$ are finitely many pieces for $j \leq k_0 + k_1$, the proof is complete if we know that each piece $R^m$ is finite dimensional over $\C$. This follows from 
		\cite[III.5.2(a)]{hartshorne::algGeom}.
	\end{proof}
\end{proposition}

\section{Holomorphic tensors on products of Sasaki manifolds}
\label{sec::holomTensorsProdSasaki}

Once choices of Reeb fields are made and fixed (\cref{sec::sasakianManifolds}), the product of two Sasaki manifolds bears an entire family of complex structures indexed by a complex nonreal parameter. Each of these complex structures acts nondiagonally on the distribution generated by the two Reeb fields, while on the distributions transverse to each of the Reeb fields it acts like the complex structure on the cone above the corresponding Sasakian. These were first introduced by Watson on Sasaki manifolds (\cite{watson}), generalising the construction of Tsukada (\cite{tsukada::eigenvalues}) and called Calabi-Eckmann-Morimoto after \cite{morimoto} and \cite{calabi_eckmann}. This family was studied extensively in \cite{andr_tolch::prodSasaki}.

In \cite{marchidanu::prodSasaki} we proposed an implicit construction of a family of complex structures, indexed also by a complex nonreal parameter, which is defined as follows. 

\begin{remark}
	\label{rmk::complexStructsProdSasaki}
	Let $\alpha \in \C \setminus \R$ and put $G_\alpha := \{ (t, \alpha t) : t \in \C \} \subset \C \times \C$.
	Let $S_1$ and $S_2$ bet Sasaki manifolds and $\pi_\alpha : C(S_1) \times C(S_2) \rightarrow (C(S_1) \times C(S_2))/G_\alpha$. Then \cite[Theorem~3.1]{marchidanu::prodSasaki} shows that 
	$(C(S_1) \times C(S_2))/G_\alpha \simeq S_1 \times S_2$ and this identification endows $S_1 \times S_2$ with a complex structure making $\pi_\alpha$ a holomorphic submersion.
\end{remark}

\begin{remark}
	\label{rmk::conesOverSasakiAreAlgebraicCones} 
	By \cite[Structure~Theorem]{ornea_verbitsky::structureTheoremVaisman}, the cone $C(S)$ over a Sasaki manifold $S$ is the $\Z$-cover of a Vaisman manifold, which is LCK with potential (\cite{ornea_verbitsky::convexShells}). Hence, by \cref{thm::algebraicConesAreZCoversLCK}, $C(S)$ is an open algebraic cone in the sense of \cref{def::alg_cone_contraction}. 
\end{remark}

Now we can prove:

\begin{theorem}
	\label{thm::holomorphicTensorsProdSasaki}
	Let $S$ be the product of two Sasaki manifolds. Let $\mathcal{T}$ be a tensor bundle on $S$ and $\varphi \in \Gamma(S, \mathcal{T})$ be a holomorphic tensor. Then $\varphi$ is invariant under the flows of the Reeb fields of the two Sasaki manifolds.
	
	\begin{proof}
		Denote by $S_i$ each Sasaki manifold and by $C_i$ each Sasakian's open cone.
		Let $\gamma_i$ be contractions to the apexes of the $C_i$'s.
		Then $\gamma := \gamma_1 \times \gamma_2$ is a contraction on $C:= C_1 \times C_2$
		(more precisely, on the product $X$ of the Stein completions $X_i$ of the $C_i$'s).
		By \cref{thm::holomorphicTensorsProdConesFromBase} - which is applicable to $C$ by \cref{rmk::conesOverSasakiAreAlgebraicCones} - as well as the natural compatibility of the complex structure on $S_1 \times S_2$ and the one on $C$ as described in \cref{rmk::complexStructsProdSasaki},
		to show that $\varphi$ is invariant under one flow of one of the Reeb fields it suffices
		to find a $\gamma$ such that the Zariski closure of $\langle \gamma \rangle$
		contains the desired flow.
		
		To achieve this goal, we need to materialise a perspective on the algebraic structure guaranteed by \cref{thm::algebraicStructureFromContraction}, a perspective which involves the Reeb fields explicitly. We do this in several steps.
		
		\newProofStep\label{step::actionOnCones}
		Denote by $\xi_i$ the Reeb field on $S_i$. The isometry group $\Iso(S_i)$ is compact and the closure of the $1$-parameter group generated by the flows of $\xi_i$ inside it is commutative and connected. Therefore, by \cite[Theorem~11.2]{hall::LieGroups}, this closure is a real torus $T^{k_i} = (S^1)^{k_i}$. 
		
	    By \cite[Theorem~7.1.10]{boyer_galicki::SasakianGeometryBook}, 
		we can find quasi-regular Reeb fields arbitrarily close to $\xi_i$ in its Sasakian cone,
		which consists of the Reeb vector fields which are compatible with the same underlying Sasakian CR structure
		(\cite[Section~8.2.3]{boyer_galicki::SasakianGeometryBook}).
		Let $\xi_i^{\quasireg}$ be a quasi-regular Reeb field inside the Lie algebra of $T^{k_i}$.
		
		The complexified torus $T^{k_i}_{\C} = (\C^*)^{k_i}$ acts on $C_i$ in the following way. We see the Lie algebra $T^{k_i}_{\C}$ as $\LieAlg{t}_i \oplus \sqrt{-1} \LieAlg{t}_i$ - where $\LieAlg{t}_i$ is the Lie algebra of $T^{k_i}$ - and we bid $\sqrt{-1}$ act as the complex structure $J_i$ on the cone $C_i$. 
		In other words, if $V, W \in \LieAlg{t}$ then, denoting by $\Phi^V_t$ the flow of the vector field $V$ at time $t$, $V+\sqrt{-1}W$ gives an action by 
		$\exp(V+J_i W)=\Phi_1^{V+J_i W}$ 
		on $C_i$.
		Because $\exp: \LieAlg{t} \oplus \sqrt{-1} \LieAlg{t} \rightarrow T^{k_i}_\C$ is surjective, this defines indeed an action of $T^{k_i}_\C$.
		
		Since the flows of $\xi_i$ are Sasakian automorphisms on the Sasaki manifold, $T^{k_i}$ also consists of Sasakian automorphisms by \cite{tanno::automorphismGroups}. In particular, elements of $T^{k_i}$ lift to biholomorphisms on $C_i$ and thus $T^{k_i}_\C$ acts by biholomorphisms by definition.
		This shows moreover that the action $T^k_\C := T^{k_1}_\C \times T^{k_2}_\C$ itself is holomorphic as a map $T^k_\C \times C \rightarrow C$. Indeed, note that for $V \in \LieAlg{t}_i$ we have that the mapping $s+\sqrt{-1} t \mapsto \phi_s^V \phi_t^{JV}(q)$ is holomorphic in each $q_i$ because $[V, J_iV] = 0$ and $L_V J_i = 0$. Since $\exp: \LieAlg{t}_i \rightarrow T^{k_i}_\C$ is surjective, the map $T^k_\C \times \{ q \} \rightarrow C$ is locally a composition of maps $s+\sqrt{-1} t \mapsto \phi_s^V \phi_t^{JV}(q)$ for each $q$ - once we pick bases of vector fields $V$ in each $\LieAlg{t}_i$ - and it's also obviously holomorphic in the $q$ variable.
		
		\newProofStep\label{step::passageLineBundles}
		For each $S_i$, consider the orbifold $Y_i := S_i/ \langle \xi_i^{\quasireg} \rangle$ and a line orbibundle $L_i$ over it, defined as in \cref{rmk::lineOrbibundlesDef}.
 
		The central objects of the ensuing argument are the graded rings $R_i$ of equation \eqref{eqn::gradedRingsSections} corresponding to each $S_i$.
		
		We will work with the ring $R := R_1 \otimes_\C R_2$.
		A key fact is that $R$ is a finitely generated $\C$ algebra, being a product of two finitely generated ones (\cref{proposition::sectionRingFinitelyGenerated}).
		As will become transparent through the proof, for our purposes we cannot pick an arbitrary set of generators. We must pick the generators in such a way that the embedding they give (see \cref{step::embeddingViaGenerators}) diagonalises the action of $T^k_{\C}$. Fortunately, this can be done, as explained in \cref{step::weightDecomposition}.
		
		\newProofStep
		\label{step::weightDecomposition}
		The goal now is to study the action of $T^{k_i}_\C$ on each of the pieces 
		$R_i^m := H^0(Y, (L_i^*)^{\otimes m})$.
		Since $Y$ is a compact complex space and all tensor powers of $L_i^*$ are coherent analytic sheaves, the Cartan-Serre finiteness theorem (\cite[Chapter~VI]{remmert_grauert::SteinSpaces}) 
		(or \cref{step::secRingFG/finiteGenerationEachPiece}
		of \cref{proposition::sectionRingFinitelyGenerated}
		) guarantees that each $R_i^m$ is a finite dimensional $\C$-vector space.
		
		Let $V:= R_i^m$ and consider the action $\rho: T^{k_i}_\C \rightarrow GL(V)$. Restricting $\rho$ to the compact torus $T^{k_i}$, we can apply the spectral theorem to simultaneously diagonalize all the elements in $\rho(T^{k_i})$. Since $T^{k_i}$ is generated by its Lie algebra and the action of $T^{k_i}_\C$ is also defined through $\exp$ via linear extension on the Lie algebra of $T^{k_i}_\C$, we obtain this way a decomposition of $V$ into simultaneous eigenvectors for $\rho(T^{k_i}_\C)$. More precisely, for $\chi$ a character of $T^{k_i}_\C$ we denote:
		$$
			V_\chi := \{ v \in V: \rho(g) v = \chi(g) v, \quad \forall g \in T^{k_i}_\C \}
		$$
		Then the decomposition is:
		\begin{equation}
			\label{eqn::weightDecomposition}
			V = \bigoplus_{\chi \in \Hom(T^{k_i}_\C, \C^*)} V_\chi
		\end{equation}
		
		Then we simply pick all the generators of $R_i$ so that each of them belongs to some weight space $V_\chi$, where $V$ is some $R_i^m$. The relevance of this choice will become transparent in \cref{step::actionOnEmebedding}.
		
		\newProofStep 
		\label{step::SteinTubes}
		
		We now show how to embed the product $C$ holomorphically into $\C^k \setminus \{ 0 \}$ using $R := R_1 \otimes R_2$.
		
		First, fix one of the $S_i$ and denote by $E$ the zero section of $\Tot(L_i)$. It follows easily from (\cref{def::Sasaki}) that $r^2$ is a K\"ahler potential on the cone, so $r^2$ is smooth and strictly plurisubharmonic on the manifold $C_i$.
		
		Fix $\varepsilon > 0$. First we consider the tube $T_\varepsilon := \{ r < \varepsilon \}$ is relatively compact, and its boundary $\{ r = \varepsilon \}$ lies in $C_i$, where $r^2 - \varepsilon^2$ is a strictly plurisubharmonic defining function; hence $T_\varepsilon$ is strongly pseudoconvex and therefore holomorphically convex (\cite[\S1,~Satz~3]{gauert::uberModifikationen}). Let $\rho : T_\varepsilon \rightarrow \Sigma_\varepsilon$ be its Remmert reduction onto a Stein space, with $\holo_{\Sigma_\varepsilon} \xrightarrow{\sim} \rho_* \holo_{T_\varepsilon}$ (\cite[\S2,~Satz~1]{gauert::uberModifikationen}). Since $r^2$ is strictly plurisubharmonic on $C_i$, the maximum principle for (pluri)subharmonic functions leaves $C_i$ without compact analytic subsets of positive dimension; thus $E$ is the maximal compact analytic subset of $T_\varepsilon$, and $\rho$ contracts it to a single point $o$, with $\rho^{-1}(o) = E$ and $\rho : T_\varepsilon \setminus E \xrightarrow{\sim} \Sigma_\varepsilon \setminus \{ o \}$ a biholomorphism by the properties of the Remmert reduction.
		
		As $\Sigma_\varepsilon$ is Stein, $\holo(T_\varepsilon) = \rho^* \holo(\Sigma_\varepsilon)$ separates the points of $T_\varepsilon \setminus E$.
		
		\newProofStep
		\label{step::weightDecompositionIntegral}
		The circle action on $\Tot(L_i)$, denote it $e^{\sqrt{-1}\theta} \cdot v$, fixes $E$ and preserves $T_\varepsilon$. For $g \in \holo(T_\varepsilon)$ and $m \in \Z$ we set
		\[
		g_m(v) := \frac{1}{2\pi} \int_0^{2\pi} g(e^{\sqrt{-1}\theta} \cdot v) \, e^{-\sqrt{-1}m\theta} \, d\theta .
		\]
		Pulling back to an orbifold chart $(V_p, \Gamma_p, U_p)$ as in \cref{rmk::constructionOfConeOrbifoldChart}, $g$ has a fibrewise Taylor expansion $\sum_{k \geq 0} c_k(x) v^k$ and $g_m$ is its weight-$m$ part $c_m(x) v^m$; hence  $g = \sum_{m \geq 0} g_m$ locally uniformly, with the Cauchy bound $\sup_{\{ r \leq \tau \}} |g_m| \leq (\tau / \tau')^m \sup_{\{ r \leq \tau' \}} |g|$ for $\tau < \tau' < \varepsilon$. Each $g_m$ is a weight-$m$ function on $T_\varepsilon \cap C_i$; it extends uniquely to such a function on all of $C_i$, namely $g_m^\sharp(v) := t^{-m} g_m(t \cdot v)$ for any $0 < t < \varepsilon / r(v)$, and by \cref{lemma::sectionRingsAreFunctions} this $g_m^\sharp$ lies in $H^0(Y_i, (L_i^*)^{\otimes m})$, the graded piece $R_i^m$ of $R_i$. Carried out in each factor for the two circle actions on $C_1 \times C_2$, the same average decomposes a function by bidegree.
		
		\newProofStep
		\label{step::embeddingViaGenerators}
		To arrive at the desired embedding, we use rescalings of what we obtained in \cref{step::SteinTubes}. 
		
		To show that points can be separated, let $v \neq w$ in $C_i$, choose $t > 0$ with $tv, tw \in T_\varepsilon \setminus E$.
		By \cref{step::SteinTubes} some $g \in \holo(T_\varepsilon)$ has $g(tv) \neq g(tw)$, so $g_m(tv) \neq g_m(tw)$ for some $m$, necessarily $m \geq 1$ as $R_i^0 = \C$. Then $f := g_m^\sharp \in R_i^m$ satisfies $f(v) = t^{-m} g_m(tv) \neq t^{-m} g_m(tw) = f(w)$. In particular, applying separation to $v \neq 2 \cdot v$ yields a homogeneous $f \in R_i^m$, $m \geq 1$, with $f(v) \neq 2^m f(v)$, so $f(v) \neq 0$.
		
		To show that the injective map we thus obtain is an immersion, let $0 \neq \zeta \in T_v C$ and pick $t$ with $tv \in T_\varepsilon \setminus E$. By \cref{step::weightDecompositionIntegral}, $g = \sum_m g_m$ can be differentiated termwise, so $dg|_{tv} = \sum_m d(g_m^\sharp)|_{tv}$. Supposing now by absurd that $df|_v(\zeta) = 0$ for every homogeneous $f \in R_i$, then $d(g_m^\sharp)|_{tv}(d (w \mapsto tw) \zeta) = t^m \, d(g_m^\sharp)|_v(\zeta) = 0$ for all $m$, hence $dg|_{tv}(d(w \mapsto tw) \zeta) = 0$ for all $g$, which is impossible since $d(w \mapsto tw) \zeta \neq 0$. So the differentials at $v$ of homogeneous elements of $R_i$ span $T_v^* C_i$.
		
		For a section $s$ of $(L_i^*)^{\otimes m}$ let $\eval_s : \Tot^\circ(L_i) \rightarrow \C$, $\eval_s(l) := s(l^{\otimes m})$, be the associated function of (\cref{lemma::sectionRingsAreFunctions}). Choose generators $s_1, \ldots, s_{K_1}$ of $R_1$ and $s_{K_1 + 1}, \ldots, s_K$ of $R_2$ and let $\Psi_1:= s_1 \times \cdots \times s_{K_1}$, $\Psi_2 = s_{K+1} \times \cdots s_K$; $\Psi_i : C_i \rightarrow \C^{K_i}$, and put $\Psi:= \Psi_1 \times \Psi_2$. By the properties above, $\Psi_i$ is injective, an immersion, and misses $0$. 
		It is moreover proper: writing $v = r(v) \cdot u$ with $u \in \{ r = 1 \}$ we have the homogeneity equation $|\eval_{s_j}(v)| = r(v)^{d_j} |\eval_{s_j}(u)|$ ($d_j = \deg s_j \geq 1$). The function $u \mapsto \max_j |\eval_{s_j}(u)|$ is continuous and positive on the compact $\{ r = 1 \}$, so the homogeneity relation confines $r$ on the preimage of any compact subset of $\C^{K_i} \setminus \{ 0 \}$ to a compact interval. Hence $\Psi_i$ is a closed holomorphic embedding of $C_i$ onto a submanifold of $\C^{K_i} \setminus \{ 0 \}$. By Remmert's proper mapping theorem (\cite[Chapter~10,~\S6.1]{grauert_remmert::coherentAnalyticSheaves}), $\Psi_i(C)$ is an analytic set in $\C^{K_i} \setminus \{ 0 \}$. Thus $\Psi(C)$ is a closed analytic set in $(\C^{K_1} \setminus \{ 0 \}) \times (\C^{K_2} \setminus \{ 0 \} )$.

		\newProofStep
		\label{step::RIntegrallyClosed}
		We show $R = R_1 \otimes_\C R_2$ is integrally closed in its fraction field. \cref{step::weightDecompositionIntegral} gives an injection $R \hookrightarrow \holo(C_1 \times C_2)$, thus $R$ is a domain since $C_1 \times C_2$ is connected. $R$ is also finitely generated (\cref{proposition::sectionRingFinitelyGenerated}).
		
		Let $F$ lie in the fraction field of $R$ and satisfy
		\begin{equation}
			\label{eqn::equationFIntegralOverR}
			F^d + a_1 F^{d-1} + \cdots + a_d = 0, \qquad a_j \in R .
		\end{equation}
		Write $F = P/Q$ with $P, Q \in R$, $Q \not\equiv 0$, so $F$ is holomorphic off the nowhere dense analytic set $Z(Q) \subset C_1 \times C_2$. Bounding the roots of a polynomials by its coefficients gives $|F| \leq 1 + \sum_j |a_j|$, with continuous right-hand side; so $F$ is locally bounded near $Z(Q)$ and extends holomorphically across it by the Riemann extension theorem (\cite[Ch.~7,~\S1.3]{grauert_remmert::coherentAnalyticSheaves}), applied on the manifold $C_1 \times C_2$.
		
		Each $a_j$ is a finite sum of bihomogeneous pieces $a_{j,(\alpha,\beta)}$ (\cref{step::weightDecompositionIntegral}); let $D$ be the largest degree among them. Fix $v = (v_1, v_2)$ and expand $\lambda \mapsto F(\lambda \cdot v)$, holomorphic on $(\C^*)^2$ by the above, as the double Laurent series 
		$\sum_{(a,b)} F_{a,b}(v) \lambda_1^a \lambda_2^b$ of \cref{step::weightDecompositionIntegral}. From 
		\[
			a_j(\lambda \cdot v)| \leq \max(1, |\lambda_1|)^D \max(1, |\lambda_2|)^D \sum_{(\alpha, \beta)} |a_{j,(\alpha, \beta)}(v)|
		\]
		and the root bound, 
		$|F(\lambda \cdot v)| \leq C(v) \max(1, |\lambda_1|)^D \max(1, |\lambda_2|)^D$.
		The Cauchy estimate then gives, for all radii $\rho_1, \rho_2 > 0$,
		\begin{equation}
			\label{eqn::boundFab}
		|F_{a,b}(v)| \leq C(v) \, \rho_1^{-a} \max(1, \rho_1)^D \, \rho_2^{-b} \max(1, \rho_2)^D .
		\end{equation}
		Letting $\rho_i \to \infty$ in \eqref{eqn::boundFab} shows that $F_{a,b} \rightarrow 0$ when $a > D$ or $b > D$, and $\rho_i \to 0$ when $a < 0$ or $b < 0$. Hence $F = \sum_{0 \leq a, b \leq D} F_{a,b}$ with each $F_{a,b} \in R_1^a \otimes_\C R_2^b$ (\cref{step::weightDecompositionIntegral}), so $F \in R$.
		
		\newProofStep
		\label{step::analytificationIsNormal}
		The analytification $\Spec(R)^{\mathrm{an}}$ is a normal complex space. Indeed, finitely generated $\C$-algebras are excellent and the completion of an excellent normal local ring is normal (\cite[Thm.~23.9]{matsumura::commutativeRingTheory}; since $R$ is finitely generated (\cref{proposition::sectionRingFinitelyGenerated}), the completions of the local rings of $\Spec(R)$ are normal. But the algebraic and analytic local rings of $\Spec(R)$ have isomorphic completions by (\cite[{n\textsuperscript{o}~6,~Prop.~3}]{serre::GAGA}). Finally, $\holo_{\Spec(R)^{\mathrm{an}},x}\to \widehat{\holo}_{\Spec(R)^{\mathrm{an}},x}$ is faithfully flat and normality descends along a faithfully flat map. Thus every local ring of $(\Spec(R))^{\mathrm{an}}$ is integrally closed (which is equivalent to geometric normality - \cite{grauert_remmert::coherentAnalyticSheaves}).
		
		\newProofStep
		\label{step::imageIsProductSteinCompletion}
		Denote by $\widetilde C$ the Zariski closure of $\Psi(C)$ inside $\C^K$. We show now that
		$\widetilde C \simeq X$ i.e. that $\widetilde C$ is the product of the Stein completions of the open cones. 
		
		Let 
		$I := \ker(\C[T_1, \ldots, T_K] \rightarrow R)$. Then by construction $\widetilde C = V(I)$. The only difficulty is showing that the regular locus of $\widetilde C$ is not larger than $\Psi(C)$.
		
		By \cref{step::analytificationIsNormal}, $(\widetilde C)^{\mathrm{an}}$ is a normal complex space. Since it's a closed analytic subset of $\C^K$, it is Stein. Furthermore, as seen in \cref{step::embeddingViaGenerators}, $\Psi(C)$ is a closed analytic subset of $(\C^{K_1} \setminus \{ 0 \}) \times (\C^{K_2} \setminus \{ 0 \})$.
		We seek to apply the Identity Lemma (\cite[\S9.1.1]{grauert_remmert::coherentAnalyticSheaves} to the complex space $\widetilde C \setminus \Sing(\widetilde C)$ and the closed analytic subset $\Psi(C)$ of $\widetilde C \setminus \Sing(\widetilde C)$; its hypotheses are satisfied because $\widetilde C \setminus \Sing(\widetilde C)$ is connected by \cite[\S9.1.2]{grauert_remmert::coherentAnalyticSheaves} since 
		$\widetilde{C}^{\mathrm{an}}$ is irreducible. We conclude that
		$\widetilde{C} \setminus \Sing(\widetilde C) = \Psi(C)$.
		
		Moreover, retracing the same argument for the embedding of a single cone shows that $\widetilde{C_i}$ is the union of the embedding of $C_i$ in $\C^{K_i}$ with $0 \in \C^{K_i}$, which shows that $\widetilde C$ is indeed (biholomorphic to) the product of the Stein completions $X_1 \times X_2$.
		 
		 \newProofStep
		 	\label{step::actionOnEmebedding}
		 	It remains to observe what happens to the flows of the Reeb fields on $X$ when $X$ is seen as $\widetilde C$ from \cref{step::imageIsProductSteinCompletion}.
		 	For each flow, we will find appropriate contractions inside the group $G_\alpha$ defining the product $S_1 \times S_2$ as $(C_1 \times C_2)/G_\alpha$, such that the flow is in the Zariski closure of the contraction.
		 	
		 	As in \cref{step::weightDecomposition}, let $V:= R_i^m$ be a space in which a generator $s$ which we used to produce the embedding $\Psi$ lives. By the choice made in \cref{step::weightDecomposition}, there exists a character $\chi$ of $T^k_\C$ such that $s \in V_\chi$.
		 	
		 	Denote $\tilde\rho$ the action of $T^k_\C$ on $\Im\Psi$ given by
		 	$
		 		\tilde\rho(g) \Psi(l) :=
		 		\Psi (\rho(g) l)
		 	$
		 	
		 	By the properties of the exponential map and definitions:
		 	\begin{equation}
		 		\label{eqn::actionAndCharactersRelation}
		 			\rho(\exp(X))s = \exp(d_0 \chi (X)) s, \quad \forall X \in \LieAlg{t}^k_\C
		 	\end{equation}
		 	
		 	\cref{eqn::actionAndCharactersRelation} shows that on the image of the embedding via the generators $s_1, \ldots, s_K$ of $R$, we have:
		 	
		 	\begin{equation}
		 		\label{eqn::actionOnImageEmbedding}
			 	\rho(\exp(X)) \Psi(l) =
			 	(
			 		e^{d_0 \chi_1(X)} ev_{s_1}(l), 
			 		\ldots,
			 		e^{d_0 \chi_K(X)} ev_{s_K}(l)
			 	)
		 	\end{equation}
		 	where $\chi_j$ are characters such that $s_j \in (R_{i_j, m_j})_{\chi_j}$, $i_j \in \{ 1, 2 \}$, $m_j \in \N^{>0}$.
		 	
		  	Because $\exp: \LieAlg{t}^k_\C \rightarrow T^k_\C$ is surjective,
		 	\eqref{eqn::actionOnImageEmbedding} shows that on $\Im \Psi$ the action of $T^k_\C$ is a sub-action of the standard diagonal action of $(\C^*)^K$ on $\C^K$ (where $K$ has no connection with $k$).
		 	
		 	We see elements of $R_i^m$ as holomorphic functions on $C_i$ via \cref{lemma::sectionRingsAreFunctions}. 
		 	By \cref{step::actionOnCones}, the action map $T^{k_i}_\C \times C \rightarrow C$ is holomorphic, which entails that the action $\rho: T^{k_i}_\C \rightarrow \GL(V)$ is holomorphic, as it can be seen by writing $\rho$ using Cramer's rule for a basis of $V$, which consists of holomorphic functions. Therefore $\rho$ is in each variable an exponential map, which, together with the weight decomposition \cref{eqn::weightDecomposition}, readily shows that it is algebraic. Thus $\tilde\rho(T^k_\C)$ is an algebraic subgroup of $(\C^*)^K$.
		 	
		 	\newProofStep
				\label{step::completingReebsInZariskiArg}	 	
		 	Let now $\alpha \in \C \setminus \R$ defining $G_\alpha$ and the complex structure on $S_1 \times S_2$. For the remainder of the proof, our goal is for each $\xi_i$ and each $c \in \R$ to produce a contraction of $C$ which is a contraction on each $C_i$, belongs to $G_\alpha$, and contains $c \xi_i$ in its Zariski closure.
		 	
		 	We will fix $\gamma = \exp(X)$ later. For the moment, we observe that by \cref{step::actionOnEmebedding} the action such a $\gamma$ will be in any case algebraic, so it makes sense to speak of the Zariski closure $\mathcal{G}$ of $\gamma$ inside $(\C^*)^K$. By \cite[Section~16.2,~Proposition~p.~103]{humphreys::linearAlgGroups}, $\mathcal{G}$ is the intersection of the kernels of some algebraic characters of $(\C^*)^K$. 
		 	
		 	Therefore, it is enough to find a setup in which whenever $\chi$ is an algebraic character of $(\C^*)^K$ with the property that $\chi(\tilde\rho (\exp (X))) = 1$, it follows automatically that $\chi(\tilde\rho (\exp (c\xi_i))) = 1$.
		 	
		 	An algebraic character of a complex torus is a monomial with integer powers in the entries of the torus (\cite[p.~102]{humphreys::linearAlgGroups}), say $\chi = x_1^{n_1} \cdots x_K^{n_K}$, $n_j \in \Z$. 
		 	Thus, the previous  requirement reduces to showing that whenever equation:
		 	
		 	\begin{equation}
		 		\label{eqn::characterTrivialExplicit}
		 		\exp \left(
		 			\sum_{j=1}^K n_j d_0 \chi_j(Z) 
		 		\right) = 1
		 	\end{equation}
		 	holds for $Z=X$ defining the contraction $\gamma$, it also holds for $Z=c \xi_i$ for at least a constant $c$ (see \cref{step::choicesOfContractions} for why this suffices).
		 	
		 \newProofStep
		 	\label{step::fromContractionFieldToReeb}
		 Since $\chi_j$ are (holomorphic, hence algebraic) characters of $T^k_\C$, they are monomials with integer powers. Hence $\chi_j(T^k) \subset S^1$, where $T^k = (S^1)^k$ is the compact torus inside $T^k_\C$. Therefore, since $\xi_i \in T^k$, $d_0 \chi_j(\xi_i)$ is in the Lie algebra of $S^1$ whenever $d_0 \chi_j(\xi_i) \neq 0$, which means that
		 \begin{equation}
		 	\label{eqn::charactersOnReebFields}
		 	\left\lvert \exp(n_j d_0\chi_j(\xi_i)) \right\rvert = 1
		 \end{equation}
		 Because $\xi_i$ and $E_i$ commute, \eqref{eqn::charactersOnReebFields} establishes that if \eqref{eqn::characterTrivialExplicit} holds for $X_i = aE_i + b\xi_i$, then it also holds for $aE_i$. But then \eqref{eqn::characterTrivialExplicit} must also hold for $b\xi_i$.
		 
		 This argument shows that if \eqref{eqn::characterTrivialExplicit}
		 holds for a linear combination of the Reeb and Euler fields, then it holds for the part of the linear combination which only contains the Reeb fields. 
		 
		 \newProofStep
		 	\label{step::choicesOfContractions}
		 We show now that for each $c\xi_i$, $c \in \R$, there exists at least one contraction in $G_\alpha$ which is also a contraction on both factors $C_1$ and $C_2$ and such that $\exp(c\xi_i)$ sits in the Zariski closure of it.
		 
		 Recall that $G_\alpha = \{ (t, \alpha t) : t \in \C\} \subset \Aut(C)$.	 
		 Set
		 $$
		 	c_{\xi_1} (c, \alpha, r) := 
		 	\frac{1}{|\alpha|^2} \left(
		 		c \Re \alpha - r \Im \alpha
		 	\right)
		 ,\quad
		 c_{\xi_2} (c, \alpha, r) := 
			 c \Re \alpha + r \Im \alpha
		 $$
		 
		 For $c\xi_1$, let $t:= r_1 + \sqrt{-1}c$ and
		 $$
		 X_1 :=
			 r_1 E_1 
			 + c \xi_1
			 + \Re(\alpha t ) E_2 
			 + c_{\xi_2} (c, \alpha, r)\xi_2
		 $$
		 
		 By \cref{step::fromContractionFieldToReeb}, if \eqref{eqn::characterTrivialExplicit} is satisfied for $X$, then it's also satisfied for $c \xi_1 + c_{\xi_2} (c, \alpha, r) \xi_2$.
		 Define
		 $
		 \Lambda_i := \spanSimple_\Z (d_0 \chi_1(\xi_i), \ldots, d_0 \chi_K (\xi_i))
		 , \quad i=\overline{1,2}
		 $
		 
		 If there exists a suitable $r_1$ such that 
		 for any $0 \neq W_2 \in \Lambda_2$ and any
		 $W_1 \in \Lambda_1$ we have $c W_1 + c_{\xi_2}(c, \alpha, r_1) W_2 \not\in 2 \pi \Z$, then it is guaranteed by \cref{step::fromContractionFieldToReeb} that whenever \eqref{eqn::characterTrivialExplicit} holds for $X_1$, it also holds for $c \xi_1$, as desired.
		 
		 Now observe that the condition $c W_1 + c_{\xi_2}(c, \alpha, r_1) W_2 \not\in 2 \pi \Z$ translates to
		 $$
		 	r_1 \not\in \frac{2 \pi \Z - c W_1 - c \Re(\alpha) W_2}{\Im(\alpha) W_2}
		 $$
		 Since $\Lambda_1$ and $\Lambda_2$ are countable, it suffices to show that we always have uncountably many choices for $r_1$.
		 
		 For $c\xi_2$, we reparametrise $G_\alpha$ as $\{ (v/\alpha, v) : v \in \C \}$. Then we look for a suitable $v=r_2 + \sqrt{-1}c$ and a vector field $X_2$ of the form:
		 $$
		 X_2 :=
			 \Re(v/\alpha) E_1 
			 + c_{\xi_1}(c, \alpha, r_2) \xi_1
			 + r_2 E_2 
			 + c\xi_2
		 $$
		 such that $\exp(X_2)$ is a contraction on both factors.
		 Similarly to the situation for $c\xi_1$, if there exists $r_2$ such that for any
		 $0 \neq W_1 \in \Lambda_1$ and any $W_2 \in \Lambda_2$ we have 
		 $
		 c_{\xi_1}(c, \alpha, r_2) W_1 + c W_2 \not\in 2 \pi \Z
		 $
		 then \cref{step::fromContractionFieldToReeb} guarantees that whenever \eqref{eqn::characterTrivialExplicit} holds for $X_2$, it also holds for $c \xi_2$, as desired. But the condition
		 $
		 c_{\xi_1}(c, \alpha, r_2) W_1 + c W_2 \not\in 2 \pi \Z
		 $
		 translates to
		 $$
		 r_2 \not\in \frac{
		 		c \Re(\alpha) W_1 + 
		 		|\alpha|^2 \left(
		 		c W_2 - 2 \pi \Z
		 		\right) 
		 	}
		 	{
		 		\Im(\alpha) W_1
	 		}
		 $$
		 and since $\Lambda_1$ and $\Lambda_2$ are countable, it suffices to show that we always have uncountably many choices for $r_2$.
		 
		 To summarise, we are left to investigate, for a given $\alpha \in \C \setminus \R$ and a given $c \in \R$, when do we have an uncountable set of choices of $r_1 \in \R$ such that $\exp(X_1)$ is a contraction on both factors, as well as when we have an uncountable set of choices of $r_2 \in \R$ such that $\exp(X_2)$ is a contraction on both factors.
		 
		 \textbf{Case 1.} $\Re(\alpha) > 0$.
		 
		 Any choice of
		 $
		 r_1 < \min
		 \left(
		 0, \frac
		 	{
		 		c\Im(\alpha)
		 	}
		 	{
		 		\Re(\alpha)
		 	}
		 \right)
		 $ 
		 will guarantee that $\exp(X_1)$ is a contraction on both factors.  
		 Any choice of
		 $
		 r_2 < \min
		 \left(
		 	0, \frac
		 		{
		 			-\Im(v)\Im(\alpha)
		 		}
		 		{
		 			\Re(\alpha)
 				}
		 \right)
		 $
		 guarantees that $\exp(X_2)$ is a contraction on both factors.
		 
		 \textbf{Case 2.} $\Re(\alpha) \leq 0$.
		 
		  Denote by 
		  $S_1(\alpha) := \{ \Im(t): t \in \C, \Re(t) < 0, \Re(t\alpha) < 0 \}$.
		  In other words, $S_1(\alpha)$ gathers the imaginary parts of all the $C_1$-components of contractions in $G_\alpha$.
		 
		 The contraction conditions imply via similar choices as in case 1 that 
		 $$
		 S_1(\alpha) = 
			 \begin{cases}
			 	(0, \infty), \quad \Im(\alpha) > 0 \\
			 	(-\infty, 0), \quad \Im(\alpha) < 0
			 \end{cases}
		 $$
		 But if $Z$ satisfies \eqref{eqn::characterTrivialExplicit}, then so does $2\pi l Z$, for any $l \in \Z$. Therefore, $c\xi_1$ sits in the Zariski closure of at least one contraction for any given $c$. 
		 
		 For $c\xi_2$, as before we reparametrise $G_\alpha$ as $(v/\alpha, v)$, $v \in \C$.
		 Denote 
		 $S_2(\alpha) := \{ \Im(v): v \in \C, \Re(v) < 0, \Re(v/\alpha) < 0 \}$.
		 Then
		 $$
		 S_2(\alpha) = 
		 \begin{cases}
		 	(-\infty, 0), \quad \Im(\alpha) > 0 \\
		 	(0, \infty), \quad \Im(\alpha) < 0
		 \end{cases}
		 $$
		 Again by the observation about solutions of \eqref{eqn::characterTrivialExplicit}, $c\xi_2$ sits in the Zariski closure of at least one contraction for any given $c$. 
		 This completes the proof.
	\end{proof}
\end{theorem}

\hfill

{\small
	
	\noindent {\sc Vlad Marchidanu\\
		University of Bucharest, Faculty of Mathematics and Informatics, \\
            14 Academiei str., 70109 Bucharest, Romania\\
            also: \\
            Institute of Mathematics “Simion Stoilow” of the Romanian Academy \\
            21, Calea Grivitei Str. 010702-Bucharest, Romania \\
		\tt marchidanuvlad@gmail.com}
}
\end{document}